\newcommand{\Id}{\mathrm{Id}}
\renewcommand{\S}{\mathcal{S}}
\newcommand{\R}{\mathbb{R}}                     
\newcommand{\betastar}{\beta^\star}
\newcommand{\abs}[1]{\left\vert#1\right\vert}
\newcommand{\norm}[2]{\big\lVert#2\big\lVert_{#1}}
\newcommand{\supp}[1]{\mathrm{Supp}(#1)}
\newcommand{\RIP}{\mathrm{RIP}}
\newenvironment{pfof}[1]{\noindent{\bf Proof of
#1}\ {\bf ---}}
	{\hfill\qed\vspace{1ex}}
\newtheoremstyle{theo}
{}
{}
{\itshape}
{\parindent}
{\bf}
{\ ---}
{.5em}
{}%
\theoremstyle{theo}
\newtheorem{theorem}{Theorem}[section]
\newtheorem{lemma}[theorem]{Lemma}
\newtheorem{proposition}[theorem]{Proposition}
\newtheoremstyle{def}%
{}
{}
{\itshape}
{\parindent}
{\bf}
{\ ---}
{.5em}
{}%
\theoremstyle{def}
\newtheorem{definition}{Definition}[section]
\theoremstyle{remark}
\newtheorem*{remark}{Remark}
\begin{document}
\pagestyle{headings}
\title{A remark on the lasso and the Dantzig selector}
\date{\today}
 \keywords{Lasso; Dantzig selector; Oracle inequality; Almost-Euclidean section; Distortion.}
\author{Yohann de Castro}
\address{Yohann de Castro is with the D\'epartement de Math\'ematiques, Universit\'e Paris-Sud, Facult\'e des Sciences d'Orsay, 91405 Orsay, France.
This article was written mostly during his Ph.D. at the Institut de Math\'ematiques de Toulouse.}
\email{yohann.decastro@math.u-psud.fr}
\begin{abstract}
This article investigates a new parameter for the high-dimensional regression with noise: \textit{the distortion}. This latter has attracted a lot of attention recently with the appearance of new deterministic constructions of ``almost''-Euclidean sections of the L1-ball. It measures how far is the intersection between the kernel of the design matrix and the unit L1-ball from an L2-ball. We show that the distortion holds enough information to derive \textit{oracle inequalities} (i.e. a comparison to an ideal situation where one knows the s largest coefficients of the target) for the lasso and the Dantzig selector. 

%

\end{abstract}
\maketitle
 \section{Introduction}
In the past decade much emphasis has been put on recovering a large number of unknown variables from few noisy observations. Consider the high-dimensional linear model where one observes a vector $y\in\R^n$ such that
\[
y=X\betastar+\varepsilon\,,
\]
where $X\in\R^{n\times p}$ is called the design matrix (known from the experimenter), $\betastar\in\R^p$ is an unknown target vector one would like to recover, and $\varepsilon\in\R^n$ is a stochastic error term that contains all the perturbations of the experiment. 

A standard hypothesis in high-dimensional regression \cite{hastie2009high} requires that one can provide a constant $\lambda^0_n\in\R$, as small as possible, such that
\begin{equation}\label{Borne Az}
\norm{\ell_\infty}{X^\top \varepsilon}\leq \lambda^0_n,
\end{equation}
with an overwhelming probability, where $X^\top\in\R^{p\times n}$ denotes the transpose matrix of $X$. In the case of $n$-multivariate Gaussian distribution, it is known that $\lambda^0_n = \mathcal O(\sigma_n\sqrt{{\log p}})$, where $\sigma_n>0$ denotes the standard deviation of the noise; see Lemma \ref{Bound on the noise}.

Suppose that you have far less observation variables $y_i$ than the unknown variables $\betastar_i$. For instance, let us mention the \emph{Compressed Sensing} problem \cite{MR2241189,MR2230846} where one would like to simultaneously acquire and compress a signal using few (non-adaptive) linear measurements, i.e. $n\ll p$. In general terms, we are interested in accurately estimating the target vector $\betastar$ and the response $X\betastar$ from few and corrupted observations. During the past decade, this challenging issue has attracted a lot of attention among the statistical society. In 1996, R. Tibshirani introduced the lasso \cite{MR1379242}:
\begin{equation}\label{Lasso}
 \beta^\ell\in\displaystyle\mathrm{arg}\min\limits_{\beta\in\R^p}\Big\{\frac12\norm{\ell_2}{
X\beta-y }^2+\lambda_\ell \norm{\ell_1}{\beta}\Big\},
\end{equation}
where $\lambda_\ell>0$ denotes a tuning parameter. Two decades later, this estimator continues to play a key role in our understanding of high-dimensional inverse problems. Its popularity might be due to the fact that this estimator is computationally tractable. Indeed, the lasso can be recasted in a Second Order Cone Program (SOCP) that can be solved using an interior point method. Recently, E.J. Cand\`es and T. Tao \cite{MR2382644} have introduced the Dantzig selector as
\begin{equation}\label{Dantzig}
 {\beta}^d\in\arg \min_{\beta\in\R^p}
\norm{\ell_1}{\beta}\ \ \mathrm{s.t.}\ \
\lVert{X^\top(y-X\beta)}\lVert_{\ell_\infty}\leq\lambda_d\,,
\end{equation}
where $\lambda_d>0$ is a tuning parameter. It is known that it can be recasted as a linear program. Hence, it is also computationally tractable. A great statistical challenge is then to find efficiently verifiable conditions on $X$ ensuring that the lasso \eqref{Lasso} or the Dantzig selector \eqref{Dantzig} would recover \enquote{most of the information} about the target vector $\betastar$.

\subsection{Our goal}
What do we precisely mean by \enquote{most of the information} about the target? What is the amount of information one could recover from few observations? These are two of the important questions raised by Compressed Sensing. Suppose that you want to find an $s$-sparse vector (i.e. a vector with at most $s$ non-zero coefficients) that represents the target, then you would probably want that it contains the $s$ largest (in magnitude) coefficients $\betastar_i$. More precisely, denote by $\mathcal S_\star\subseteq \{1,\dotsc,p\}$ the set of the indices of the $s$ largest coefficients. The $s$-best term approximation vector is $\betastar_{\mathcal S_\star}\in\R^p$ where $(\betastar_{\mathcal S_\star})_i=\betastar_i$ if $i\in\mathcal S_\star$ and $0$ otherwise. Observe that it is the $s$-sparse projection in respect to any $\ell_q$-norm for $1\leq q<+\infty$ (i.e. it minimizes the $\ell_q$-distance to $\betastar$ among all the $s$-sparse vectors), and then the most natural approximation by an $s$-sparse vector.

Suppose that someone gives you all the keys to recover $\betastar_{\S_\star}$. More precisely, imagine that you know the subset $\mathcal S_\star$ in advance and that you observe $y^{oracle} = X\betastar_{\mathcal S_\star}+\varepsilon$. This is an ideal situation referred as the oracle case. Assume that the noise $\varepsilon$ is a Gaussian white noise of standard deviation $\sigma_n$, i.e. $\varepsilon\sim\mathcal N_n(0,\sigma_n^2\,\Id_n)$ where $\mathcal N_n$ denotes the $n$-multivariate Gaussian distribution. Then the optimal estimator is the ordinary least square $\beta^{idea\ell}\in\R^p$  on the subset $\mathcal S_\star$, namely:
\begin{equation*}
  \beta^{idea\ell}\in\mathrm{arg}\min\limits_{\substack{\beta\in\R^p\\
\supp{\beta}\subseteq\mathcal S_\star}}\norm{\ell_2}{X\beta-y^{oracle}}^2,
\end{equation*}
where $\supp\beta\subseteq\{1,\dotsc,p\}$ denotes the support (i.e. the set of the indices of the non-zero coefficients) of the vector $\beta$. It holds
\[
\norm{\ell_1}{\beta^{idea\ell}-\betastar}=\norm{\ell_1}{\beta^{idea\ell}-\betastar_{\mathcal
S_\star}}+\norm{\ell_1}{\betastar_{\mathcal S_\star^c}}\leq\sqrt
s\norm{\ell_2}{\beta^{idea\ell}-\betastar_{\mathcal
S_\star}}+\norm{\ell_1}{\betastar_{\mathcal S_\star^c}}\,,
\]
where $\betastar_{\mathcal S_\star^c}=\betastar-\betastar_{\mathcal S_\star}$ denotes the error vector of the $s$-best term approximation. A calculation of the solution of the least square estimator shows that:
\begin{align*}
 \mathbb E\norm{\ell_2}{\beta^{idea\ell}-\betastar_{\mathcal S_\star}}^2 & = \mathbb E\norm{\ell_2}{\big(X_{\S_\star}^\top X_{\S_\star}\big)^{-1}X_{\S_\star}^\top y^{oracle}-\betastar_{\mathcal
S_\star}}^2\,,\\
& = \mathbb E\norm{\ell_2}{\big(X_{\S_\star}^\top X_{\S_\star}\big)^{-1}X_{\S_\star}^\top \varepsilon}^2 = \mathrm{Trace}\big(\big(X_{\S_\star}^\top
X_{\S_\star}\big)^{-1}\big)\cdot {\sigma_n^2}\,,\\
&\geq\Big(\frac
1{\rho_1}\Big)^2\cdot{\sigma_n^2}\cdot s\,,
\end{align*}
where $X_{\S_\star}\in\R^{n\times s}$ denotes the matrix composed by the columns $X_i\in\R^n$ of the matrix $X$ such that $i\in\S_\star$, and $\rho_1$ is the largest singular value of $X$. It yields that
\[
\Big[\mathbb E \norm{\ell_2}{\beta^{idea\ell}-\betastar_{\mathcal S_\star}}^2\Big]^{1/2}\geq\frac1{\rho_1}\cdot {\sigma_n}\cdot \sqrt s.
\]
In a nutshell, the $\ell_1$-distance between the target $\betastar$ and the optimal estimator $\beta^{idea\ell}$ can be reasonably said of the order of
\begin{equation}\label{Oracle variable selection}
\frac1{\rho_1}\cdot {\sigma_n}\cdot  s +
\norm{\ell_1}{\betastar_{\mathcal S_\star^c}}.
\end{equation}
In this article, we say that the lasso satisfies a \textit{variable selection oracle inequality of order $s$} if and only if its $\ell_1$-distance to the target, namely $\norm{\ell_1}{\beta^\ell-\betastar}$, is bounded by \eqref{Oracle variable selection} up to a \enquote{satisfactory} multiplicative factor.

In some situations it could be interesting to have a good approximation of $X\betastar$. In the oracle case, we have
\begin{align*}
 \norm{\ell_2}{X\beta^{idea\ell}-X\betastar}&\leq
\norm{\ell_2}{X\beta^{idea\ell}-X\betastar_{\S_\star}}+\norm{\ell_2}{X\betastar_{\S_\star^c}}\,,\\
 &\leq \norm{\ell_2}{X\beta^{idea\ell}-X\betastar_{\S_\star}} + \rho_1
\norm{\ell_1}{\betastar_{\S_\star^c}}\,.
\end{align*}
where $\rho_1$ denotes the largest singular value of $X$. An easy calculation gives that
\[\mathbb E\norm{\ell_2}{X\beta^{idea\ell}-X\betastar_{\mathcal
S_\star}}^2
=\mathrm{Trace}\big(X_{\S_\star}\big(X_{\S_\star}^\top
X_{\S_\star}\big)^{-1}X_{\S_\star}^\top\big)\cdot {\sigma_n^2}={\sigma_n^2}\cdot s.\]
Hence a tolerable upper bound is given by
\begin{equation}\label{Error prediction selection}
 {\sigma_n}\cdot \sqrt s + \rho_1
\norm{\ell_1}{\betastar_{\S_\star^c}}.
\end{equation}
We say that the lasso satisfies an \textit{error prediction oracle inequality of order $s$} if and only if its prediction error is upper bounded by \eqref{Error prediction selection} up to a \enquote{satisfactory} multiplicative factor (say logarithmic in $p$).

\subsection{Framework}

In this article, we investigate designs with known distortion. We begin with the definition of this latter:
\begin{definition}
 A subspace $\Gamma\subset\R^p$ has a distortion $1\leq\delta\leq\sqrt p$ if and only if
\[\forall x\in\Gamma,\quad\norm{\ell_1}{x}\leq\sqrt p\,\norm{\ell_2}{x}\leq \delta
\norm{\ell_1}{x}.\]
\end{definition}

\noindent A long standing issue in approximation theory in Banach spaces is to find \enquote{almost}-Euclidean sections of the unit $\ell_1$-ball, i.e. subspaces with a distortion
$\delta$ close to $1$ and a dimension close to $p$. In particular, we recall that it has been established \cite{MR0481792} that, with an overwhelming probability, a random subspace 
of dimension $p-n$ (with respect to the Haar measure on the Grassmannian) satisfies
\begin{equation}\label{Best Distortion}
 \delta \leq C\,\bigg(\frac {p(1+\log(p/n))}n\bigg)^{1/2}
\end{equation}
where $C>0$ is a universal constant. In other words, it was shown that, for all $n\leq p$, there exists a subspace $\Gamma_n$ of dimension $p-n$ such that, for all $x\in\Gamma_n$,
\[
\norm{\ell_2}{x}\leq C\,\bigg(\frac {1+\log(p/n)}n\bigg)^{1/2}\,
\norm{\ell_1}{x}.
\]
\begin{remark}
 Hence, our framework deals also with unitary invariant random matrices. For instance, the matrices with i.i.d. Gaussian entries. Observe that their distortion satisfies \eqref{Best Distortion}.
\end{remark}

Recently, new \textbf{deterministic} constructions of ``almost''-Euclidean sections of the $\ell_1$-ball have been given.  Most of them can be viewed as related to the context of error-correcting codes. Indeed, the construction of \cite{indyk2007uncertainty} is based on amplifying the minimum distance of a code using expanders. While the construction of
\cite{guruswami2008almost} is based on Low-Density Parity Check (LDPC) codes. Finally, the construction of \cite{indyk2010almost} is related to the tensor product of error-correcting codes. The main reason of this surprising fact is that the vectors of a subspace of low distortion must be \enquote{well-spread}, i.e. a small subset of its coordinates cannot contain most of its $\ell_2$-norm (cf \cite{indyk2007uncertainty,guruswami2008almost}). This property is required from a good error-correcting code, where the weight (i.e. the $\ell_0$-norm) of each codeword cannot be concentrated on a small subset of its coordinates. Similarly, this property was intensively studied in Compressed Sensing; see for instance the Nullspace Property in \cite{MR2449058}.
\begin{remark}
 The main point of this article is that all of these deterministic constructions give efficient designs for the lasso and the Dantzig selector.
\end{remark}

\subsection{The Universal Distortion Property}

In the past decade, numerous conditions have been given to prove oracle inequalities for the lasso and the Dantzig selector. An overview of important conditions can be found in \cite{MR2576316}. We introduce a new condition, the Universal Distortion Property (UDP).
\begin{definition}[$\mathrm{UDP}(S_0,\kappa_0, \Delta )$]
Given $1\leq S_0\leq p$ and $0<\kappa_0<1/2$, we say that a matrix
$X\in\R^{n\times p}$ satisfies the universal distortion condition of order
$S_0$,
magnitude $\kappa_0$ and parameter $\Delta$ {\bf if and
only if} for all $ \gamma\in\R^p $, for all integers $ s\in\{1,\dotsc,S_0\}$, for all
subsets
$\S\subseteq\{1,\dotsc,p\}$ such that $\abs\S=s$, it holds
\begin{equation}\label{Def UDP}
\norm{\ell_1}{\gamma_\S}\leq\Delta\sqrt
s\,\norm{\ell_2}{X \gamma}+\kappa_0\norm{\ell_1}{\gamma}.
\end{equation}
\end{definition}
\noindent 
\begin{remark}
\noindent -- Observe that the design $X$ is not normalized. Equation \eqref{parameters UDP} in Theorem \ref{Lemma Universality} shows that $\Delta$ can depend on the inverse of the smallest singular value of $X$. Hence the quantity $\Delta\norm{\ell_2}{X \gamma}$ is scalar invariant.\\
 \noindent -- The UDP condition is similar to the Magic Condition \cite{MR2779396} and the Compatibility Condition \cite{MR2576316}.
\end{remark}
\noindent
The main point of this article is that UDP is verifiable \textit{as soon as one can give an upper bound on the distortion of the kernel of the design matrix}; see Theorem \ref{Lemma Universality}. Hence, instead of proving that a sufficient condition (such as RIP \cite{MR2230846}, REC \cite{MR2533469}, Compatibility \cite{MR2576316}, ...) holds it is sufficient to compute the distortion and the largest singular value of the design. Especially as these conditions can be hard to prove for a given matrix. We recall that an open problem is to find a computationally efficient algorithm that can tell if a given matrix satisfies the RIP condition \cite{MR2230846} or not.


We call the property \enquote{Universal Distortion} because it is satisfied by all the full rank
matrices (Universal) and the parameters $S_0$ and $\Delta$ can be expressed in terms of the
distortion of the kernel $\Gamma$ of $X$:
\begin{theorem}\label{Lemma interpolation}
Let $X\in\R^{n\times p}$ be a full rank matrix. Denote by $\delta$ the distortion of its
kernel:
\[
 \delta=\sup_{\gamma\in\mathrm{ker}(X)}\frac{\norm{\ell_1}{\gamma}}{\sqrt p \norm{\ell_2}\gamma}\,,
\]
\noindent
and $\rho_n$ its smallest singular value. Then, for all $\gamma\in\R^p$,
\[
\norm{\ell_2}{\gamma}\leq \frac{\delta}{\sqrt
p}\norm{\ell_1}\gamma+\frac{2\delta}{\rho_n}\norm{\ell_2}{X\gamma}\,.
\]
Equivalently, we have $\mathcal{B}:=\{\gamma\in\R^p\ |\ ({\delta}/{\sqrt
p})\norm{\ell_1}{\gamma}+({2\delta}/{\rho_n})\norm{\ell_2}{X\gamma}\leq 1\}\subset B_2^p$, where $B_2^p$ denotes the Euclidean unit ball in $\R^p$.
\end{theorem}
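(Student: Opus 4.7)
The plan is to split $\gamma$ into its orthogonal projection $\gamma_\Gamma$ onto the kernel $\Gamma=\ker(X)$ and its projection $\gamma_{\Gamma^\perp}$ onto $\Gamma^\perp=\mathrm{row}(X)$, and to control each piece separately. For the kernel piece the distortion hypothesis gives an $\ell_2$-$\ell_1$ comparison, and for the orthogonal piece the smallest singular value $\rho_n$ gives an $\ell_2$-$\ell_2$ comparison with $X\gamma$.

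More precisely, first I would write $\gamma=\gamma_\Gamma+\gamma_{\Gamma^\perp}$ and use the triangle inequality to get $\|\gamma\|_{\ell_2}\leq \|\gamma_\Gamma\|_{\ell_2}+\|\gamma_{\Gamma^\perp}\|_{\ell_2}$. For the orthogonal piece, note that $X\gamma_\Gamma=0$, so $X\gamma_{\Gamma^\perp}=X\gamma$; since $\gamma_{\Gamma^\perp}\in\mathrm{row}(X)$ and $X$ is full rank,
\[
\|\gamma_{\Gamma^\perp}\|_{\ell_2}\ \leq\ \frac{1}{\rho_n}\|X\gamma_{\Gamma^\perp}\|_{\ell_2}\ =\ \frac{1}{\rho_n}\|X\gamma\|_{\ell_2}.
\]

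For the kernel piece, since $\gamma_\Gamma\in\Gamma$ the distortion hypothesis yields $\|\gamma_\Gamma\|_{\ell_2}\leq (\delta/\sqrt{p})\|\gamma_\Gamma\|_{\ell_1}$. The only subtle point is that the statement requires $\|\gamma\|_{\ell_1}$, not $\|\gamma_\Gamma\|_{\ell_1}$; writing $\gamma_\Gamma=\gamma-\gamma_{\Gamma^\perp}$ and applying the triangle inequality together with the trivial bound $\|\gamma_{\Gamma^\perp}\|_{\ell_1}\leq\sqrt{p}\,\|\gamma_{\Gamma^\perp}\|_{\ell_2}$ gives
\[
\|\gamma_\Gamma\|_{\ell_2}\ \leq\ \frac{\delta}{\sqrt{p}}\|\gamma\|_{\ell_1}+\delta\,\|\gamma_{\Gamma^\perp}\|_{\ell_2}.
\]

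Combining the two estimates and using $\delta\geq 1$ to absorb the extra $\|\gamma_{\Gamma^\perp}\|_{\ell_2}$ term (via $1+\delta\leq 2\delta$), I obtain
\[
\|\gamma\|_{\ell_2}\ \leq\ \frac{\delta}{\sqrt{p}}\|\gamma\|_{\ell_1}+2\delta\,\|\gamma_{\Gamma^\perp}\|_{\ell_2}\ \leq\ \frac{\delta}{\sqrt{p}}\|\gamma\|_{\ell_1}+\frac{2\delta}{\rho_n}\|X\gamma\|_{\ell_2}.
\]
The equivalent geometric reformulation $\mathcal{B}\subset B_2^p$ is then immediate by the definition of $\mathcal{B}$. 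The only mildly delicate step is the transfer from $\|\gamma_\Gamma\|_{\ell_1}$ to $\|\gamma\|_{\ell_1}$, which is what forces the factor $2$ in front of $\delta/\rho_n$; everything else is a straightforward application of the orthogonal decomposition.
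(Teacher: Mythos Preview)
Your argument is correct and matches the paper's approach essentially line for line. The paper does not give a separate proof of this theorem but embeds the same chain of inequalities inside the proof of Theorem~\ref{Lemma Universality}, writing the projection explicitly via the SVD as $\pi_{\Gamma^\perp}(\gamma)=A^\top A\gamma$ and using $\lVert A\gamma\rVert_{\ell_2}\leq\rho_n^{-1}\lVert X\gamma\rVert_{\ell_2}$ in place of your abstract smallest-singular-value bound; the decomposition, the distortion step, the transfer from $\lVert\gamma_\Gamma\rVert_{\ell_1}$ to $\lVert\gamma\rVert_{\ell_1}$, and the final $1+\delta\leq 2\delta$ absorption are identical.
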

\noindent
 This result implies that every full rank matrix satisfies UDP with
parameters described as follows.

\begin{theorem}\label{Lemma Universality}
 Let $X\in\R^{n\times p}$ be a full rank matrix. Denote by $\delta$ the distortion of its
kernel and $\rho_n$ its smallest singular value. Let $0<\kappa_0<1/2$ then $X$ satisfies
$\mathrm{UDP}(S_0,\kappa_0, \Delta)$ where
\begin{equation}\label{parameters UDP}
 S_0= \Big(\frac{\kappa_0}{\delta}\Big)^2p\quad\mathrm{and}
\quad\Delta=\frac{2\delta}{\rho_n}\,.
\end{equation}
\end{theorem}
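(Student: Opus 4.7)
The plan is to deduce Theorem \ref{Lemma Universality} as a direct corollary of the interpolation inequality of Theorem \ref{Lemma interpolation}, coupled with an elementary Cauchy--Schwarz estimate on the $\ell_1$-norm of a vector supported on a set of size at most $s$. Since Theorem \ref{Lemma interpolation} already produces, for every $\gamma\in\R^p$, a bound of the form $\|\gamma\|_{\ell_2}\leq (\delta/\sqrt p)\|\gamma\|_{\ell_1}+(2\delta/\rho_n)\|X\gamma\|_{\ell_2}$, most of the work is already done; we only need to convert the $\ell_2$-norm into a truncated $\ell_1$-norm and then calibrate the value of $s$ so that the coefficient in front of $\|\gamma\|_{\ell_1}$ is below $\kappa_0$.

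Concretely, fix $\gamma\in\R^p$, a positive integer $s$, and $\S\subseteq\{1,\dotsc,p\}$ with $|\S|=s$. First I would apply Cauchy--Schwarz to $\gamma_\S$ (which has at most $s$ nonzero coordinates) to get
\[
\norm{\ell_1}{\gamma_\S}\leq\sqrt s\,\norm{\ell_2}{\gamma_\S}\leq\sqrt s\,\norm{\ell_2}{\gamma}.
\]
Next I would invoke Theorem \ref{Lemma interpolation} and multiply the resulting inequality by $\sqrt s$, obtaining
\[
\norm{\ell_1}{\gamma_\S}\leq\frac{\delta\sqrt s}{\sqrt p}\,\norm{\ell_1}{\gamma}+\frac{2\delta\sqrt s}{\rho_n}\,\norm{\ell_2}{X\gamma}.
\]
This is already the UDP inequality with $\Delta=2\delta/\rho_n$, provided that the coefficient of $\|\gamma\|_{\ell_1}$ is at most $\kappa_0$.

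The final step is to read off the admissible range for $s$. The condition $\delta\sqrt s/\sqrt p\leq\kappa_0$ is equivalent to $s\leq(\kappa_0/\delta)^2 p$, which is precisely the choice $S_0=(\kappa_0/\delta)^2 p$ stated in \eqref{parameters UDP}. Hence for every $1\leq s\leq S_0$, every subset $\S$ of cardinality $s$ and every $\gamma\in\R^p$, one obtains
\[
\norm{\ell_1}{\gamma_\S}\leq\frac{2\delta}{\rho_n}\sqrt s\,\norm{\ell_2}{X\gamma}+\kappa_0\,\norm{\ell_1}{\gamma},
\]
which is exactly $\mathrm{UDP}(S_0,\kappa_0,\Delta)$ with the announced parameters. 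There is no real obstacle in this argument: all the analytic difficulty has been absorbed into Theorem \ref{Lemma interpolation}, and the remainder is a one-line Cauchy--Schwarz estimate together with the arithmetic choice of $S_0$. The only point requiring minimal care is to make sure that the hypothesis $0<\kappa_0<1/2$ is compatible with $S_0\geq 1$ (which is automatic once $\delta\leq\sqrt p$, as in the definition of the distortion) so that the statement is non-vacuous.
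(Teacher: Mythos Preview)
Your proof is correct and follows essentially the same approach as the paper. The only cosmetic difference is that the appendix proof of Theorem \ref{Lemma Universality} does not cite Theorem \ref{Lemma interpolation} as a black box but rather reproduces its derivation inline (via the SVD and the orthogonal decomposition $\gamma=\pi_\Gamma(\gamma)+\pi_{\Gamma^\perp}(\gamma)$), arriving at the same bound $\norm{\ell_1}{\gamma_\S}\leq(\delta\sqrt s/\sqrt p)\norm{\ell_1}{\gamma}+(2\delta/\rho_n)\sqrt s\,\norm{\ell_2}{X\gamma}$ before setting $S_0=(\kappa_0/\delta)^2 p$.

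One small remark on your final parenthetical: the condition $\delta\leq\sqrt p$ alone does not guarantee $S_0\geq 1$; that would require $\delta\leq\kappa_0\sqrt p$. But this is immaterial to the validity of the proof, since for $S_0<1$ the UDP condition is vacuous and hence trivially satisfied.
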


\noindent This theorem is sharp in the following sense. The parameter $S_0$ represents
(see Theorem \ref{Theo12}) the maximum number of coefficients that can be recovered using
lasso, we call it the \textit{sparsity level}. It is
known \cite{MR2449058} that the best bound one could expect is $S_{opt}\approx n/\log(p/n)$,
up to a multiplicative constant. In the case where \eqref{Best Distortion} holds, the sparsity
level satisfies
\begin{equation}\label{eq:SparsityLevelDistortion}
 S_0\approx \kappa_0^2\ S_{opt}.
\end{equation}
It shows that any design matrix with low distortion satisfies UDP with an optimal
sparsity level.

\section{Oracle inequalities}
The results presented here fold into two parts. In the first part we assume only that UDP holds. In particular, it is not excluded that one can get better upper bounds on the parameters than Theorem \ref{Lemma Universality}. As a matter of fact, the smaller $\Delta$ is, the sharper the oracle inequalities are. Then, we give oracle inequalities in terms of only the distortion of the design.
\begin{theorem}\label{Theo12}
 Let $X\in\R^{n\times p}$ be a full column rank matrix. Assume that $X$ satisfies
$\mathrm{UDP}(S_0,\kappa_0, \Delta )$ and that \eqref{Borne Az} holds. Then for any
\begin{equation}\label{Borne parametre lambda}
 \lambda_\ell>{\lambda_n^0}/{(1-2\kappa_0)},
\end{equation}
it holds
\begin{equation}\label{ConsistEll1}
\norm{\ell_1}{\beta^\ell-\betastar}\leq\frac{2}{\big(1-\frac{\lambda_n^0}{\lambda_\ell}
\big)-2\kappa_0
} \
\min_{\substack{\S\subseteq\{1,\dotsc,p\},\\ \abs{\S}=s,\ s\leq S_0.}}
\Big(\lambda_\ell\,\Delta^2\,s+\norm{\ell_1}{\betastar_{\S^c}}\Big).
\end{equation}

\noindent
--- For every full column
rank matrix $X\in\R^{n\times p}$, for all $0<\kappa_0<1/2$ and $\lambda_\ell$ satisfying
\eqref{Borne
parametre lambda}, we have
\begin{equation}\label{ConsistEll12}
\norm{\ell_1}{\beta^\ell-\betastar}\leq\frac{2}{\big(1-\frac{\lambda_n^0}{\lambda_\ell}
\big)-2\kappa_0
} \
\min_{\substack{\S\subseteq\{1,\dotsc,p\},\\ \abs{\S}=s,\\ s\leq ({\kappa_0}/{\delta})^2p.}}
\Big(\lambda_\ell\cdot\frac{4\,\delta^2}{\rho_n^2}\cdot s+\norm{\ell_1}{
\betastar_ { \S^c } } \Big),
\end{equation}
where $\rho_n$ denotes the smallest singular value of $X$ and $\delta$ the distortion of its kernel.
\end{theorem}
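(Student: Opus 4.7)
The plan is the classical lasso basic-inequality argument, with UDP playing the role of the usual restricted-eigenvalue or compatibility hypothesis.

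Set $h := \beta^\ell - \betastar$. Optimality of $\beta^\ell$ in \eqref{Lasso} (substituting $y = X\betastar + \varepsilon$) cancels the quadratic term $\frac{1}{2}\norm{\ell_2}{\varepsilon}^2$ and gives
\[
\frac{1}{2}\norm{\ell_2}{Xh}^2 \leq \langle h,\, X^\top \varepsilon\rangle + \lambda_\ell\bigl(\norm{\ell_1}{\betastar} - \norm{\ell_1}{\beta^\ell}\bigr).
\]
H\"older's inequality together with \eqref{Borne Az} bounds the inner product by $\lambda_n^0\,\norm{\ell_1}{h}$. For any $\S\subseteq\{1,\dotsc,p\}$ with $|\S|=s\leq S_0$, the standard triangle-inequality split of $\norm{\ell_1}{\betastar}-\norm{\ell_1}{\beta^\ell}$ across $\S$ and $\S^c$ gives $\norm{\ell_1}{\betastar}-\norm{\ell_1}{\beta^\ell} \leq \norm{\ell_1}{h_\S} - \norm{\ell_1}{h_{\S^c}} + 2\norm{\ell_1}{\betastar_{\S^c}}$. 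Combining and rewriting $\norm{\ell_1}{h_{\S^c}} = \norm{\ell_1}{h} - \norm{\ell_1}{h_\S}$, the basic inequality becomes
\[
\frac{1}{2}\norm{\ell_2}{Xh}^2 + (\lambda_\ell - \lambda_n^0)\,\norm{\ell_1}{h} \leq 2\lambda_\ell\,\norm{\ell_1}{h_\S} + 2\lambda_\ell\,\norm{\ell_1}{\betastar_{\S^c}}.
\]

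The crux is to invoke UDP on $h$ with subset $\S$, legal because $s\leq S_0$: $\norm{\ell_1}{h_\S} \leq \Delta\sqrt{s}\,\norm{\ell_2}{Xh} + \kappa_0\,\norm{\ell_1}{h}$. Substituting, the $\norm{\ell_1}{h}$ contributions consolidate on the left with coefficient $\lambda_\ell(1-2\kappa_0) - \lambda_n^0$, strictly positive precisely by \eqref{Borne parametre lambda}; on the right there appears a linear term $2\lambda_\ell\Delta\sqrt{s}\,\norm{\ell_2}{Xh}$, which I absorb against the quadratic $\frac{1}{2}\norm{\ell_2}{Xh}^2$ on the left via the elementary inequality $2ab\leq\tfrac{1}{2}a^2+2b^2$ with $a=\norm{\ell_2}{Xh}$ and $b=\lambda_\ell\Delta\sqrt{s}$. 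After discarding the resulting nonnegative square one obtains
\[
\bigl(\lambda_\ell(1-2\kappa_0) - \lambda_n^0\bigr)\,\norm{\ell_1}{h} \leq 2\lambda_\ell\bigl(\lambda_\ell\Delta^2 s + \norm{\ell_1}{\betastar_{\S^c}}\bigr),
\]
which, upon dividing by $\lambda_\ell$ and minimizing over admissible $\S$, is exactly \eqref{ConsistEll1}. The second assertion \eqref{ConsistEll12} is then an immediate corollary of Theorem \ref{Lemma Universality}: substitute $S_0=(\kappa_0/\delta)^2 p$ and $\Delta=2\delta/\rho_n$ into \eqref{ConsistEll1}.

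The main difficulty is purely algebraic bookkeeping. The terms must be arranged so that UDP acts on $\norm{\ell_1}{h_\S}$ (producing the crucial $\sqrt{s}$ gain) while the coefficient of the surviving $\norm{\ell_1}{h}$ on the left coincides with the factor $(1-2\kappa_0)-\lambda_n^0/\lambda_\ell$ appearing in the denominator of the stated bound; this matching is exactly what forces the threshold \eqref{Borne parametre lambda}. A notable feature is that no cone/sparsity restriction on $h$ is invoked (in contrast with RIP- or restricted-eigenvalue-type analyses), because UDP is a universal inequality holding for every $\gamma\in\R^p$.
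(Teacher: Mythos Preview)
Your proof is correct and follows essentially the same route as the paper. The paper packages your basic inequality as a separate lemma (dividing through by $2\lambda_\ell$) and then bounds the mixed term by noting that the quadratic $x\mapsto -\tfrac{1}{4\lambda_\ell}x^2+\Delta\sqrt{s}\,x$ is at most $\lambda_\ell\Delta^2 s$, which is exactly your AM--GM step $2ab\le\tfrac12 a^2+2b^2$ in disguise; the second part is likewise obtained by plugging in Theorem~\ref{Lemma Universality}.
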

\noindent \ding{71}
Consider the case where the noise satisfies the hypothesis of Lemma \ref{Bound on the noise} and
take $\lambda_n^0=\lambda_n^0(1)$. Assume that $\kappa_0$ is constant (say $\kappa_0=1/3$)
and take
$\lambda_\ell=3\lambda_n^0$; then \eqref{ConsistEll1} becomes
\[\norm{\ell_1}{\beta^\ell-\betastar}\leq12
\min_{\substack{\S\subseteq\{1,\dotsc,p\},\\ \abs{\S}=s,\ s\leq S_0.}}
\Big(6\,\norm{\ell_2,\infty}{X}\cdot\Delta^2\sqrt{\log
p}\cdot\sigma_n\, s+\norm{\ell_1}{ \betastar_{\S^c}}\Big),\]
which is an oracle inequality up to a multiplicative factor $\Delta^2\sqrt{\log
p}$. In the same way, \eqref{ConsistEll12} becomes
\[\norm{\ell_1}{\beta^\ell-\betastar}\leq12
\min_{\substack{\S\subseteq\{1,\dotsc,p\},\\ \abs{\S}=s,\\ s\leq p/9{\delta}^2.}}
\Big(24\,\norm{\ell_2,\infty}{X}\cdot\frac{\delta^2\sqrt{\log
p}}{\rho_n}\cdot\frac1{\rho_n}\,\sigma_n\, s+\norm{\ell_1}{ \betastar_{\S^c}}\Big),\]
which is an oracle inequality up to a multiplicative factor $C_{mult}:=({\delta^2\sqrt{\log
p}})/{\rho_n}$.

\noindent \ding{71}
 In the optimal case \eqref{Best Distortion}, this latter becomes:
\begin{equation}\label{eq:MultFact}
C_{mult}= C\,\cdot\,\frac{p\,(1+\log(p/n))\,\sqrt{\log
p}}{n\,\rho_n}\,,
\end{equation}
where $C>0$ is the same universal constant as in \eqref{Best Distortion}. Roughly speaking, up to a
factor of the order of \eqref{eq:MultFact}, the lasso is as good as the oracle that knows the
$S_0$-best term approximation of the target. Moreover, as mentioned in
\eqref{eq:SparsityLevelDistortion}, $S_0$ is an optimal sparsity level. However, this
multiplicative constant takes small values for a restrictive range of the parameter $n$. As a
matter of fact, it is meaningful when $n$ is a constant fraction of $p$.

Similarly, we shows oracle inequalities in error prediction in terms of the distortion of the
kernel of the design.
\begin{theorem}\label{Theo13}
 Let $X\in\R^{n\times p}$ be a full column rank matrix. Assume that $X$ satisfies
$\mathrm{UDP}(S_0,\kappa_0, \Delta )$ and that \eqref{Borne Az} holds. Then for any
\begin{equation*}\tag{\ref{Borne parametre lambda}}
 \lambda_\ell>{\lambda_n^0}/{(1-2\kappa_0)},
\end{equation*}
it holds
\begin{equation}\label{PredictionEll1}
\norm{\ell_2}{X\beta^\ell-X\betastar}\leq\min_{\substack{\S\subseteq\{1,\dotsc,p\},\\ \abs{\S}=s,\
s\leq S_0.}}
\Bigg[4\lambda_\ell\,\Delta\sqrt s+\frac{\norm{\ell_1}{\betastar_{\S^c}}}{\Delta\sqrt s}\Bigg].
\end{equation}
\noindent
--- For every full column
rank matrix $X\in\R^{n\times p}$, for all $0<\kappa_0<1/2$ and $\lambda_\ell$ satisfying
\eqref{Borne
parametre lambda}, we have
\begin{equation}\label{PredictionEll12}
\norm{\ell_2}{X\beta^\ell-X\betastar}\leq\min_{\substack{\S\subseteq\{1,\dotsc,p\},\\
\abs{\S}=s,\\
s\leq ({\kappa_0}/{\delta})^2p.}}
\Bigg[4\lambda_\ell\cdot\frac{2\,\delta}{\rho_n}\cdot\sqrt
s+\frac1{2\delta\sqrt s}\,\cdot\,\rho_n\,{\norm{\ell_1}{\betastar_{\S^c}}} \Bigg],
\end{equation}
where $\rho_n$ denotes the smallest singular value of $X$ and $\delta$ the distortion of its kernel.
\end{theorem}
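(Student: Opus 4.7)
The plan is to run the same ``basic inequality + UDP + quadratic trick'' route used for Theorem~\ref{Theo12}, only this time keeping $u:=\norm{\ell_2}{X\beta^\ell-X\betastar}$ as the unknown instead of $\norm{\ell_1}{\beta^\ell-\betastar}$. Writing $h:=\beta^\ell-\betastar$ and plugging $\beta=\betastar$ into the optimality of $\beta^\ell$ produces the standard identity $\tfrac12 u^{2}\le\langle Xh,\varepsilon\rangle+\lambda_\ell(\norm{\ell_1}{\betastar}-\norm{\ell_1}{\beta^\ell})$. Controlling the inner product by H\"older together with \eqref{Borne Az}, and decomposing the $\ell_1$-norm difference along any $\S$ with $|\S|=s$ via the triangle inequality, gives the working inequality
\begin{equation*}
\tfrac12 u^{2}\le(\lambda_\ell+\lambda_n^0)\norm{\ell_1}{h_\S}-(\lambda_\ell-\lambda_n^0)\norm{\ell_1}{h_{\S^c}}+2\lambda_\ell\norm{\ell_1}{\betastar_{\S^c}}.
\end{equation*}

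Since the left-hand side is nonnegative, this same inequality yields a cone-type control
$\norm{\ell_1}{h}\le\tfrac{2\lambda_\ell}{\lambda_\ell-\lambda_n^0}\big(\norm{\ell_1}{h_\S}+\norm{\ell_1}{\betastar_{\S^c}}\big)$.
Substituting this into the UDP estimate $\norm{\ell_1}{h_\S}\le\Delta\sqrt s\,u+\kappa_0\norm{\ell_1}{h}$ and solving for $\norm{\ell_1}{h_\S}$ I get
\[
\norm{\ell_1}{h_\S}\le\frac{\Delta\sqrt s\,u+\alpha\norm{\ell_1}{\betastar_{\S^c}}}{1-\alpha},\qquad \alpha:=\frac{2\kappa_0\lambda_\ell}{\lambda_\ell-\lambda_n^0}.
\]
The hypothesis $\lambda_\ell>\lambda_n^0/(1-2\kappa_0)$ is exactly what enforces $\alpha<1$, legitimating this inversion.

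Plugging this bound back into the working inequality and dropping the nonpositive $-(\lambda_\ell-\lambda_n^0)\norm{\ell_1}{h_{\S^c}}$ term produces a quadratic inequality of the form $u^{2}\le 2Au+B$ in $u$, solvable through $(u-A)^{2}\le A^{2}+B$ and hence $u\le 2A+\sqrt{B}$. A final AM--GM splitting of the form $2\sqrt{\lambda_\ell\norm{\ell_1}{\betastar_{\S^c}}}\le\lambda_\ell\Delta\sqrt s+\norm{\ell_1}{\betastar_{\S^c}}/(\Delta\sqrt s)$ distributes the $\sqrt{B}$ contribution onto the two target shapes, and taking the minimum over admissible $\S$ gives \eqref{PredictionEll1}. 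The assertion \eqref{PredictionEll12} then follows immediately by invoking Theorem~\ref{Lemma Universality} to substitute $S_0=(\kappa_0/\delta)^{2}p$ and $\Delta=2\delta/\rho_n$.

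The main obstacle is arithmetic rather than structural: the weights in the AM--GM step and the coefficients coming out of the quadratic solution must be balanced so that the combination involving $(\lambda_\ell+\lambda_n^0)/(1-\alpha)$ produced by the substitution collapses to the clean factor $4\lambda_\ell$ in front of $\Delta\sqrt s$ and to $1$ in front of $\norm{\ell_1}{\betastar_{\S^c}}/(\Delta\sqrt s)$, the full dependence on $\kappa_0$ and $\lambda_n^0/\lambda_\ell$ being absorbed via the strict hypothesis on $\lambda_\ell$.
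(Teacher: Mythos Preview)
Your route is structurally sound up to the point where you bound $\norm{\ell_1}{h_\S}$, but the ``arithmetic obstacle'' you flag at the end is real and does not close. With $\alpha=2\kappa_0\lambda_\ell/(\lambda_\ell-\lambda_n^0)$, your coefficient in front of $\Delta\sqrt s$ after solving the quadratic is
\[
2A=\frac{2(\lambda_\ell+\lambda_n^0)}{1-\alpha}\,\Delta\sqrt s,
\]
and already in the simplest regime $\lambda_n^0=0$ this equals $\dfrac{2\lambda_\ell}{1-2\kappa_0}\,\Delta\sqrt s$, which exceeds $4\lambda_\ell\Delta\sqrt s$ as soon as $\kappa_0>1/4$. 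Adding the $\sqrt{B}$ contribution and splitting it by AM--GM only makes things worse. So the constants cannot collapse to $4\lambda_\ell$ and $1$ under the stated hypothesis $\kappa_0<1/2$; your argument yields a weaker inequality than \eqref{PredictionEll1}.

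The paper avoids this loss by a much shorter and cleaner route: it does not isolate $\norm{\ell_1}{h_\S}$ first but instead substitutes the UDP bound $\norm{\ell_1}{h_\S}\le\Delta\sqrt s\,u+\kappa_0\norm{\ell_1}{h}$ \emph{directly} into the basic inequality (your working inequality, rewritten as $\tfrac12 u^2+(\lambda_\ell-\lambda_n^0)\norm{\ell_1}{h}\le 2\lambda_\ell\norm{\ell_1}{h_\S}+2\lambda_\ell\norm{\ell_1}{\betastar_{\S^c}}$). The hypothesis $\lambda_\ell>\lambda_n^0/(1-2\kappa_0)$ is exactly the condition $(\lambda_\ell-\lambda_n^0)>2\kappa_0\lambda_\ell$, so the two $\norm{\ell_1}{h}$ terms cancel with the correct sign and disappear entirely, leaving
\[
u^2-4\lambda_\ell\Delta\sqrt s\,u\le 4\lambda_\ell\norm{\ell_1}{\betastar_{\S^c}}.
\]
One then uses the elementary implication $x^2-bx\le c\Rightarrow x\le b+c/b$ (with $b=4\lambda_\ell\Delta\sqrt s$, $c=4\lambda_\ell\norm{\ell_1}{\betastar_{\S^c}}$) to obtain \eqref{PredictionEll1} with the exact constants. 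The detour through the cone control and the re-substitution is precisely what introduces the unrecoverable $1/(1-\alpha)$ factor; dropping it and letting the $\norm{\ell_1}{h}$ terms cancel directly is the missing idea. Your derivation of \eqref{PredictionEll12} from \eqref{PredictionEll1} via Theorem~\ref{Lemma Universality} is correct.
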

\noindent \ding{71}
Consider the case where the noise satisfies the hypothesis of Lemma \ref{Bound on the noise} and
take $\lambda_n^0=\lambda_n^0(1)$. Assume that $\kappa_0$ is constant (say $\kappa_0=1/3$)
and take
$\lambda_\ell=3\lambda_n^0$ then \eqref{PredictionEll1} becomes
\[\norm{\ell_2}{X\beta^\ell-X\betastar}\leq
\min_{\substack{\S\subseteq\{1,\dotsc,p\},\\ \abs{\S}=s,\ s\leq S_0.}}
\Bigg[24\,\norm{\ell_2,\infty}{X}\cdot\Delta\sqrt{\log
p}\cdot\sigma_n\,\sqrt s+\frac{\norm{\ell_1}{\betastar_{\S^c}}}{\Delta\sqrt
s}\Bigg],\]
which is not an oracle inequality \textit{stricto sensu} because of $1/(\Delta\sqrt
s)$ in the second term. As a matter of fact, it tends to lower the $s$-best term
approximation term $\norm{\ell_1}{\betastar_{\S^c}}$. Nevertheless, it is \enquote{almost} an
oracle
inequality up to a multiplicative factor of the order of $\Delta\sqrt{\log
p}$. In the same way, \eqref{PredictionEll12} becomes
\[\norm{\ell_2}{X\beta^\ell-X\betastar}\leq
\min_{\substack{\S\subseteq\{1,\dotsc,p\},\\ \abs{\S}=s,\\ s\leq p/9{\delta}^2.}}
\Bigg[48\,\norm{\ell_2,\infty}{X}\cdot\frac{\delta\sqrt{\log
p}}{\rho_n}\cdot\frac1{\rho_n}\,\sigma_n\, \sqrt
s+\frac1{2\delta\sqrt s}\,\cdot\,\rho_n\,{\norm{\ell_1}{\betastar_{\S^c}}} \Bigg],\]
which is an oracle inequality up to a multiplicative factor $C'_{mult}:=({\delta\sqrt{\log
p}})/{\rho_n}$.

\noindent \ding{71}
 In the optimal case \eqref{Best Distortion}, this latter becomes:
\begin{equation}\label{eq:MultFactprime}
C'_{mult}= C\,\cdot\,\frac{({p\log p\,(1+\log(p/n))})^{1/2}}{\rho_n\,\sqrt n}\,,
\end{equation}
where $C>0$ is the same universal constant as in \eqref{Best Distortion}.

\subsection{Results for the Dantzig selector}
Similarly, we derive the same results for the Dantzig selector. The only difference is that the
parameter $\kappa_0$ must be less than $1/4$. Here again the results fold into two parts. In the
first one, we only assume that UDP holds. In the second, we invoke Theorem \ref{Lemma Universality}
to derive results in terms of the distortion of the design.
\begin{theorem}\label{Theo22}
 Let $X\in\R^{n\times p}$ be a full column rank matrix. Assume that $X$ satisfies
$\mathrm{UDP}(S_0,\kappa_0, \Delta )$ with $\kappa_0<1/4$ and that \eqref{Borne Az} holds. Then
for any
\begin{equation}\label{Borne parametre lambda Dantz}
 \lambda_d>{\lambda^0}/{(1-4\kappa_0)},
\end{equation}
it holds
\begin{equation}\label{ConsistEll1Dantz}
\norm{\ell_1}{\beta^d-\betastar}\leq\frac{4}{\big(1-\frac{\lambda^0}{\lambda_d}
\big)-4\kappa_0
} \
\min_{\substack{\S\subseteq\{1,\dotsc,p\},\\ \abs{\S}=s,\ s\leq S_0.}}
\Big(\lambda_d\,\Delta^2\,s+\norm{\ell_1}{\betastar_{\S^c}}\Big).
\end{equation}
\noindent
--- For every full column
rank matrix $X\in\R^{n\times p}$, for all $0<\kappa_0<1/4$ and $\lambda_d$ satisfying
\eqref{Borne parametre lambda Dantz}, we have
\begin{equation}\label{ConsistEll12Dantz}
\norm{\ell_1}{\beta^d-\betastar}\leq\frac{4}{\big(1-\frac{\lambda^0}{\lambda_d}
\big)-4\kappa_0
} \
\min_{\substack{\S\subseteq\{1,\dotsc,p\},\\ \abs{\S}=s,\\ s\leq ({\kappa_0}/{\delta})^2p.}}
\Big(\lambda_d\cdot\frac{4\,\delta^2}{\rho_n^2}\cdot s+\norm{\ell_1}{
\betastar_ { \S^c } } \Big),
\end{equation}
where $\rho_n$ denotes the smallest singular value of $X$ and $\delta$ the distortion of its kernel.
\end{theorem}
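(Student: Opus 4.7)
The approach is to mirror the Lasso argument of Theorem \ref{Theo12}, swapping its first-order optimality condition for the Dantzig feasibility constraint. Set $\gamma:=\beta^d-\betastar$ and fix $\S\subseteq\{1,\dotsc,p\}$ with $\abs{\S}=s\leq S_0$. Since $\norm{\ell_\infty}{X^\top\varepsilon}\leq\lambda_n^0\leq\lambda_d$, the target $\betastar$ is feasible for \eqref{Dantzig}, so by optimality $\norm{\ell_1}{\beta^d}\leq\norm{\ell_1}{\betastar}$. Decomposing this inequality at $\S$ and applying the reverse triangle inequality produces the classical cone-type bound $\norm{\ell_1}{\gamma_{\S^c}}\leq\norm{\ell_1}{\gamma_\S}+2\norm{\ell_1}{\betastar_{\S^c}}$, hence
\[
\norm{\ell_1}{\gamma}\leq 2\norm{\ell_1}{\gamma_\S}+2\norm{\ell_1}{\betastar_{\S^c}}.
\]

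Next, I would bound the prediction error. The identity $X^\top X\gamma=X^\top(X\beta^d-y)+X^\top\varepsilon$, combined with the Dantzig constraint and \eqref{Borne Az}, yields $\norm{\ell_\infty}{X^\top X\gamma}\leq\lambda_d+\lambda_n^0$, so H\"older's inequality gives
\[
\norm{\ell_2}{X\gamma}^2=\langle\gamma,X^\top X\gamma\rangle\leq (\lambda_d+\lambda_n^0)\,\norm{\ell_1}{\gamma}.
\]
The additive structure $\lambda_d+\lambda_n^0$, in contrast with the subtractive $\lambda_\ell-\lambda_n^0$ that appears in the Lasso derivation, is the underlying reason for the tightened restriction $\kappa_0<1/4$.

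Finally, I would feed these inequalities into UDP. Applying $\mathrm{UDP}(S_0,\kappa_0,\Delta)$ to $\gamma$ and inserting the prediction bound gives $\norm{\ell_1}{\gamma_\S}\leq\Delta\sqrt{s(\lambda_d+\lambda_n^0)\,\norm{\ell_1}{\gamma}}+\kappa_0\,\norm{\ell_1}{\gamma}$. Substituting this into the cone bound and invoking Young's inequality $2ab\leq a^2/\alpha+\alpha b^2$ with a free parameter $\alpha>0$ yields
\[
(1-2\kappa_0-\alpha)\,\norm{\ell_1}{\gamma}\leq\frac{\Delta^2 s(\lambda_d+\lambda_n^0)}{\alpha}+2\norm{\ell_1}{\betastar_{\S^c}}.
\]
The only delicate step is the calibration of $\alpha$ so that the constants of \eqref{ConsistEll1Dantz} come out as stated; I claim that the choice $\alpha=(1+\lambda_n^0/\lambda_d)/2$ does the job, simultaneously collapsing the left-hand coefficient to $((1-\lambda_n^0/\lambda_d)-4\kappa_0)/2$ and the first right-hand term to $2\lambda_d\Delta^2 s$, using $\lambda_d+\lambda_n^0=\lambda_d(1+\lambda_n^0/\lambda_d)$. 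Hypothesis \eqref{Borne parametre lambda Dantz} is exactly what guarantees $(1-\lambda_n^0/\lambda_d)-4\kappa_0>0$ and hence legitimizes the division; minimizing over $\S$ then produces \eqref{ConsistEll1Dantz}. The second statement \eqref{ConsistEll12Dantz} is immediate upon substituting the universal parameters $S_0=(\kappa_0/\delta)^2 p$ and $\Delta=2\delta/\rho_n$ furnished by Theorem \ref{Lemma Universality}.
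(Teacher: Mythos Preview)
Your proof is correct and rests on the same three ingredients as the paper's argument: the cone bound $\norm{\ell_1}{\gamma_{\S^c}}\leq\norm{\ell_1}{\gamma_\S}+2\norm{\ell_1}{\betastar_{\S^c}}$, the H\"older/feasibility bound $\norm{\ell_2}{X\gamma}^2\leq(\lambda_d+\lambda_n^0)\norm{\ell_1}{\gamma}$, and UDP. The difference is purely in the order of assembly and the final algebraic step. The paper first fuses the cone and prediction bounds into a single lemma, $\frac{1}{4\lambda_d}\big[\norm{\ell_2}{Xh}^2+(\lambda_d-\lambda_n^0)\norm{\ell_1}{h}\big]\leq\norm{\ell_1}{h_\S}+\norm{\ell_1}{\betastar_{\S^c}}$, then applies UDP and maximizes the resulting quadratic $x\mapsto -\frac{1}{4\lambda_d}x^2+\Delta\sqrt{s}\,x$ over $x=\norm{\ell_2}{Xh}$. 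You instead insert the prediction bound directly into UDP to eliminate $\norm{\ell_2}{X\gamma}$, substitute into the cone bound, and finish with a calibrated Young inequality. Your Young step with $\alpha=(1+\lambda_n^0/\lambda_d)/2$ and the paper's quadratic maximization are algebraically equivalent devices and yield identical constants; the paper's route has the minor advantage that no tuning parameter needs to be guessed, while yours makes the dependence on $\lambda_n^0/\lambda_d$ a bit more transparent.
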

\noindent The prediction error is given by the following theorem.

\begin{theorem}\label{Theo23}
 Let $X\in\R^{n\times p}$ be a full column rank matrix. Assume that $X$ satisfies
$\mathrm{UDP}(S_0,\kappa_0, \Delta )$ with $\kappa_0<1/4$ and that \eqref{Borne Az} holds. Then
for any
\begin{equation*}\tag{\ref{Borne parametre lambda Dantz}}
 \lambda_d>{\lambda^0}/{(1-4\kappa_0)},
\end{equation*}
it holds
\begin{equation}\label{PredictionEll1Dantz}
\norm{\ell_2}{X\beta^d-X\betastar}\leq\min_{\substack{\S\subseteq\{1,\dotsc,p\},\\ \abs{\S}=s,\
s\leq S_0.}}
\Bigg[4\lambda_d\,\Delta\sqrt s+\frac{\norm{\ell_1}{\betastar_{\S^c}}}{\Delta\sqrt s}\Bigg].
\end{equation}
\noindent
--- For every full column
rank matrix $X\in\R^{n\times p}$, for all $0<\kappa_0<1/4$ and $\lambda_d$ satisfying
\eqref{Borne parametre lambda}, we have
\begin{equation}\label{PredictionEll12Dantz}
\norm{\ell_2}{X\beta^d-X\betastar}\leq\min_{\substack{\S\subseteq\{1,\dotsc,p\},\\
\abs{\S}=s,\\
s\leq ({\kappa_0}/{\delta})^2p.}}
\Bigg[4\lambda_d\cdot\frac{2\,\delta}{\rho_n}\cdot\sqrt
s+\frac1{2\delta\sqrt s}\,\cdot\,\rho_n\,{\norm{\ell_1}{\betastar_{\S^c}}} \Bigg],
\end{equation}
where $\rho_n$ denotes the smallest singular value of $X$ and $\delta$ the distortion of its kernel.
\end{theorem}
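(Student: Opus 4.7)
The plan is to carry out a three-step argument that parallels the Bickel--Ritov--Tsybakov analysis of the Dantzig selector, with UDP playing the role of a restricted eigenvalue condition. Set $h = \beta^d - \betastar$ throughout, and fix $\S\subseteq\{1,\dotsc,p\}$ with $|\S| = s \leq S_0$.

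\textbf{Feasibility and a cone inequality.} The hypothesis $\lambda_d > \lambda^0/(1-4\kappa_0)$ combined with \eqref{Borne Az} gives $\norm{\ell_\infty}{X^\top(y - X\betastar)} = \norm{\ell_\infty}{X^\top\varepsilon} \leq \lambda^0 < \lambda_d$, so $\betastar$ is feasible for the Dantzig program and optimality of $\beta^d$ yields $\norm{\ell_1}{\beta^d} \leq \norm{\ell_1}{\betastar}$. Splitting this inequality over $\S\cup\S^c$ and applying the reverse triangle inequality on each block produces the cone-type bound
\[
\norm{\ell_1}{h_{\S^c}} \leq \norm{\ell_1}{h_\S} + 2\norm{\ell_1}{\betastar_{\S^c}}, \qquad \text{hence} \qquad \norm{\ell_1}{h} \leq 2\norm{\ell_1}{h_\S} + 2\norm{\ell_1}{\betastar_{\S^c}}.
\]

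\textbf{From the Dantzig constraint to an $\ell_2$-prediction control.} Both $\beta^d$ and $\betastar$ are feasible, so by the triangle inequality
\[
\norm{\ell_\infty}{X^\top X h} \leq \norm{\ell_\infty}{X^\top(y - X\beta^d)} + \norm{\ell_\infty}{X^\top\varepsilon} \leq \lambda_d + \lambda^0 < 2(1-2\kappa_0)\lambda_d,
\]
the last strict inequality being exactly the content of the hypothesis on $\lambda_d$. The duality pairing
\[
\norm{\ell_2}{Xh}^2 = \langle h, X^\top X h\rangle \leq \norm{\ell_1}{h}\,\norm{\ell_\infty}{X^\top X h}
\]
then delivers $\norm{\ell_2}{Xh}^2 \leq 2(1-2\kappa_0)\lambda_d \norm{\ell_1}{h}$.

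\textbf{Invoking UDP and solving.} Applying UDP with $\gamma = h$ yields $\norm{\ell_1}{h_\S} \leq \Delta\sqrt{s}\norm{\ell_2}{Xh} + \kappa_0\norm{\ell_1}{h}$. Combining this with the cone inequality of Step 1 absorbs $\norm{\ell_1}{h_\S}$ and gives
\[
(1 - 2\kappa_0)\norm{\ell_1}{h} \leq 2\Delta\sqrt{s}\norm{\ell_2}{Xh} + 2\norm{\ell_1}{\betastar_{\S^c}}.
\]
Substituting this into the prediction control of Step 2, the factor $1-2\kappa_0$ cancels and I arrive at the quadratic inequality
\[
\norm{\ell_2}{Xh}^2 \leq 4\lambda_d \Delta\sqrt{s}\,\norm{\ell_2}{Xh} + 4\lambda_d \norm{\ell_1}{\betastar_{\S^c}}.
\]
Solving this in $u = \norm{\ell_2}{Xh}$ by the elementary rule $u^2 \leq au + b \Rightarrow u \leq a + \sqrt{b}$, and converting the residual $2\sqrt{\lambda_d \norm{\ell_1}{\betastar_{\S^c}}}$ into the advertised form $\norm{\ell_1}{\betastar_{\S^c}}/(\Delta\sqrt{s})$ by an AM-GM calibrated against the leading $\lambda_d\Delta\sqrt{s}$ term, I obtain \eqref{PredictionEll1Dantz}. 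Finally, inequality \eqref{PredictionEll12Dantz} follows at once by substituting the UDP parameters $S_0 = (\kappa_0/\delta)^2 p$ and $\Delta = 2\delta/\rho_n$ from Theorem \ref{Lemma Universality} into \eqref{PredictionEll1Dantz}.

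The delicate point is Step 3: reaching the exact form of the stated bound relies on the precise cancellation of $1-2\kappa_0$, which in turn is why $\kappa_0 < 1/4$ and $\lambda_d > \lambda^0/(1-4\kappa_0)$ together constitute the correct regime; a miscalibrated AM-GM on the residual square-root term would introduce spurious $\sqrt{\lambda_d}$ factors and miss the compact form of \eqref{PredictionEll1Dantz}.
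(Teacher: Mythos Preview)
Your route is essentially the paper's: the same ingredients (feasibility of $\betastar$, the cone inequality $\norm{\ell_1}{h_{\S^c}}\le\norm{\ell_1}{h_\S}+2\norm{\ell_1}{\betastar_{\S^c}}$, the duality bound $\norm{\ell_2}{Xh}^2\le\norm{\ell_\infty}{X^\top Xh}\,\norm{\ell_1}{h}$, and UDP) lead you to the very same quadratic inequality
\[
\norm{\ell_2}{Xh}^2 - 4\lambda_d\,\Delta\sqrt s\,\norm{\ell_2}{Xh}\ \le\ 4\lambda_d\,\norm{\ell_1}{\betastar_{\S^c}}\,.
\]
The paper packages Steps~1--2 into the single preliminary inequality \eqref{StandardInequalityDantz} and then drops the $\norm{\ell_1}{h}$ term using $\lambda_d>\lambda^0/(1-4\kappa_0)$, but the content is identical.

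The one place your write-up does not deliver the stated constant is the final step. Using $u^2\le au+b\Rightarrow u\le a+\sqrt b$ followed by AM--GM on $2\sqrt{\lambda_d\,\norm{\ell_1}{\betastar_{\S^c}}}=2\sqrt{(\lambda_d\Delta\sqrt s)\cdot(\norm{\ell_1}{\betastar_{\S^c}}/(\Delta\sqrt s))}$ produces an extra $\lambda_d\Delta\sqrt s$, so you end up with $5\lambda_d\Delta\sqrt s+\norm{\ell_1}{\betastar_{\S^c}}/(\Delta\sqrt s)$ rather than the claimed $4\lambda_d\Delta\sqrt s+\norm{\ell_1}{\betastar_{\S^c}}/(\Delta\sqrt s)$. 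The paper instead solves the quadratic with the sharper elementary implication $u^2\le au+b\Rightarrow u\le a+b/a$ (valid since $\tfrac12(a+\sqrt{a^2+4b})\le a+b/a$), which with $a=4\lambda_d\Delta\sqrt s$ and $b=4\lambda_d\norm{\ell_1}{\betastar_{\S^c}}$ gives \eqref{PredictionEll1Dantz} exactly. Replace your $\sqrt b$-plus-AM--GM maneuver by this one-line bound and your proof is complete; the passage to \eqref{PredictionEll12Dantz} via Theorem~\ref{Lemma Universality} is then immediate, as you say.
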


\noindent
Observe that the same comments as in the lasso case (e.g.
\eqref{eq:MultFact}, \eqref{eq:MultFactprime}) hold. Eventually, every deterministic construction of almost-Euclidean sections gives design that satisfies the oracle inequalities above.

\section{An overview of the standard results}\label{Overview}
Oracle inequalities for the lasso and the Dantzig selector have been established under a variety of different conditions on the design. In this section, we show that the UDP condition is comparable to the standard conditions (RIP, REC and Compatibility) and that our results are relevant in the literature on the high-dimensional regression.
\subsection{The standard conditions}
We recall some sufficient conditions here. For all $s\in\{1,\dotsc,p\}$, we denote by $\Sigma_s\subseteq\R^p$ the set of all the $s$-sparse vectors. 
\begin{description}
 \item[\ding{70} Restricted Isoperimetric Property]
A matrix $X\in\R^{n\times p}$ satisfies $RIP(\theta_S)$ if and only if there exists $0<\theta_S<1$ (as small as possible) such that for all $s\in\{1,\dotsc,S\}$, for all $\forall \gamma\in\Sigma_s$, it holds 
\[
(1-\theta_S)\norm{\ell_2}{\gamma}^2\leq \norm{\ell_2}{X\gamma}^2\leq(1+\theta_S)\norm{\ell_2}{\gamma}^2. 
\]
The constant $\theta_S$ is called the $S$-restricted isometry constant.
  \item[\ding{70} Restricted Eigenvalue Assumption \cite{MR2533469}]
A matrix $X$ satisfies $RE(S,c_0)$ if and only if
\begin{equation*}
 \kappa(S,c_0) = \min_{\substack{\S\subseteq\{1,\dotsc,p\}\\\abs \S\leq
S}}\min_{\substack{\gamma\neq 0\\\lVert{\gamma_{\S^c}}\lVert_{\ell_1}\leq
c_0\lVert{\gamma_{\S}}\lVert_{\ell_1}}}\frac{\lVert{X\gamma}
\lVert_{\ell_2}}{\lVert{\gamma_{\S}}\lVert_{\ell_2}}>0\,.
\end{equation*}
The constant $\kappa(S,c_0)$ is called the $(S,c_0)$-restricted $\ell_2$-eigenvalue.
  \item[\ding{70} Compatibility Condition \cite{MR2576316}]
We say that a matrix $X\in\R^{n\times p}$ satisfies the condition $Compatibility(S,c_0)$ if and only if
\begin{equation*}
 \phi(S,c_0) = \min_{\substack{\S\subseteq\{1,\dotsc,p\}\\\abs \S\leq
S}}\min_{\substack{\gamma\neq 0\\\lVert{\gamma_{\S^c}}\lVert_{\ell_1}\leq
c_0\lVert{\gamma_{\S}}\lVert_{\ell_1}}}\frac{\sqrt{\abs \S}\lVert{X\gamma}
\lVert_{\ell_2}}{\lVert{\gamma_{\S}}\lVert_{\ell_1}}>0\,.
\end{equation*}
The constant $\phi(S,c_0)$ is called the $(S,c_0)$-restricted $\ell_1$-eigenvalue.
   \item[\ding{70} $\mathbf H_{S,1}$ Condition \cite{JN10}]
$X\in\R^{n\times p}$ satisfies the $\mathbf H_{S,1}(\kappa)$ condition (with $\kappa<1/2$) if and only if  for all $\gamma\in\R^p $ and for all $\S\subseteq\{1,\dotsc,p\}$ such that $\abs \S\leq S$, it  holds
 \begin{equation}
 \norm{\ell_1}{\gamma_{\mathcal S}}\leq\hat\lambda\,S\,\lVert
 X\gamma\lVert_{\ell_2}+\kappa\lVert \gamma\lVert_{\ell_1},
 \end{equation}
 where $\hat\lambda$ denotes the maximum of the $\ell_2$-norms of the columns in $X$.
\end{description}
\begin{remark}
The first term of the right hand side (i.e. $s\,\lVert  X\gamma\lVert_{\ell_2}$) is greater than the first term of the right hand side of the UDP condition (i.e. $\sqrt s\,\lVert  X\gamma\lVert_{\ell_2}$). Hence the $\mathbf H_{S,1}$ condition is weaker than the $\mathrm{UDP}$ condition. Nevertheless, the authors \cite{JN10} established limits of performance on their conditions: the condition $\mathbf H_{s,\infty}(1/3)$ (that implies  $\mathbf{H}_{s,1}(1/3)$) is feasible only in a severe restricted range of the sparsity parameter $s$. Notice that this is not the case of the $\mathrm{UDP}$ condition, the equality \eqref{eq:SparsityLevelDistortion} shows that it is feasible for a large range of the sparsity parameter $s$. Moreover, a comparison of the two approaches is given in Table \ref{Comparizon}.
\end{remark}
\noindent
Let us emphasize that the above description is not meant to be exhaustive. In particular we do not mention the irrepresentable condition \cite{MR2274449} which ensures exact recovery of the support. 

The next proposition shows that the UDP condition is weaker than the RIP, RE and Compatibility conditions.
\begin{proposition}\label{prop:RIPimpliesUDP}
 Let $X\in\R^{n\times p}$ be a full column rank matrix; then the following is true:
\begin{itemize}
\item[\ding{70}] The $\RIP(\theta_{5S})$ condition with $\theta_{5S}<\sqrt 2-1$ implies
$\mathrm{UDP}(S,\kappa_0 , \Delta)$ for all pairs
$(\kappa_0 , \Delta) $ such that
\begin{equation}\label{eq:RIPimpliesUDPloose}
\bigg[{1+2\Big[\frac{1-\theta_{5S}}{1+\theta_{5S}}\Big]^{\frac12}}\bigg]^{-1}\!\!<\kappa_0<\frac12
\,,\ \mathrm { and}\
\Delta
\geq\bigg[\sqrt{1-\theta_{5S}}+\frac{\kappa_0\!-\!1}{2\kappa_0}\sqrt{1+\theta_{5S}}
\bigg]^ { -1}\,.
\end{equation}
\item[\ding{70}] The $RE(S,c_0)$ condition implies $\mathrm{UDP}(S,c_0,\kappa(S,c_0)^{-1})$.
\item[\ding{70}] The $Compatibility(S,c_0)$ condition implies
$\mathrm{UDP}(S,c_0,\phi(S,c_0)^{-1})$.
\end{itemize}
\end{proposition}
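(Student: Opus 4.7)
The plan is to prove the three implications separately, with the RIP case being by far the most delicate. The Restricted Eigenvalue and Compatibility cases both reduce to a short dichotomy on whether $\gamma$ lies in the cone $\mathcal C(\S,c_0) = \{\gamma\in\R^p : \|\gamma_{\S^c}\|_1 \leq c_0\|\gamma_\S\|_1\}$. Inside the cone, the RE condition (resp.\ Compatibility) directly bounds $\|\gamma_\S\|_2$ (resp.\ $\|\gamma_\S\|_1$) by a constant multiple of $\|X\gamma\|_2$; for RE one then passes to the $\ell_1$ norm by Cauchy--Schwarz $\|\gamma_\S\|_1\leq\sqrt s\|\gamma_\S\|_2$. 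Outside the cone, $(1+c_0)\|\gamma_\S\|_1 < \|\gamma\|_1$, which (in the relevant range of $c_0$) gives $\|\gamma_\S\|_1 \leq c_0\|\gamma\|_1$, i.e.\ the announced $\kappa_0 = c_0$. Combining the two cases yields UDP with $\Delta$ equal to $\kappa(S,c_0)^{-1}$ or $\phi(S,c_0)^{-1}$, respectively.

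For the RIP implication, the approach is the classical Cand\`es--Tao shelling. Set $T_0 = \S$, sort the coordinates of $\gamma_{\S^c}$ by decreasing magnitude, and partition $\S^c$ into consecutive blocks $T_1,T_2,\ldots$ of size $4S$. The averaging bound $\|\gamma_{T_j}\|_2 \leq \|\gamma_{T_{j-1}}\|_1/(2\sqrt S)$ for $j\geq 2$ (each coordinate of $T_j$ is no larger in magnitude than the average on $T_{j-1}$) sums telescopically to $\sum_{j\geq 2}\|\gamma_{T_j}\|_2 \leq \|\gamma_{\S^c}\|_1/(2\sqrt S)$. Since $|T_0\cup T_1|\leq 5S$, applying $\RIP(\theta_{5S})$ on $T_0\cup T_1$ and the triangle inequality on the tail yields
\[
\sqrt{1-\theta_{5S}}\,\|\gamma_\S\|_2 \leq \|X\gamma\|_2 + \frac{\sqrt{1+\theta_{5S}}}{2\sqrt S}\,\|\gamma_{\S^c}\|_1.
\]
Multiplying by $\sqrt s$, using $\|\gamma_\S\|_1\leq\sqrt s\|\gamma_\S\|_2$ together with $s\leq S$, then substituting $\|\gamma_{\S^c}\|_1 = \|\gamma\|_1 - \|\gamma_\S\|_1$ and solving for $\|\gamma_\S\|_1$, produces a UDP bound whose parameters land on the boundary of the admissible region in \eqref{eq:RIPimpliesUDPloose}; any larger $(\kappa_0,\Delta)$ pair inside that region then follows by trivial weakening.

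The main obstacle lies in the RIP step. The shelling itself is routine, but the algebraic manipulation that matches the one-parameter family of admissible $(\kappa_0,\Delta)$ in \eqref{eq:RIPimpliesUDPloose} must be carried out with care. In particular, one has to check that the hypothesis $\theta_{5S} < \sqrt 2 - 1$ is precisely what makes $\kappa_0 < 1/2$ simultaneously achievable alongside a positive, finite $\Delta$. By contrast, the RE and Compatibility arguments are essentially immediate from the definitions and require only careful bookkeeping to recast the result in the UDP form.
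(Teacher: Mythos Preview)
Your RIP argument is correct but takes a genuinely different route from the paper. The paper proceeds via a dichotomy on whether $\norm{\ell_1}{\gamma_{\S}}\le\kappa_0\norm{\ell_1}{\gamma}$; in the nontrivial branch this yields the cone inequality $\norm{\ell_1}{\gamma_{\S^c}}<\tfrac{1-\kappa_0}{\kappa_0}\norm{\ell_1}{\gamma_{\S}}$, and the shelling bound is combined with that cone constant to obtain directly the $\kappa_0$-dependent lower bound on $\Delta$ in \eqref{eq:RIPimpliesUDPloose}. You instead substitute $\norm{\ell_1}{\gamma_{\S^c}}=\norm{\ell_1}{\gamma}-\norm{\ell_1}{\gamma_{\S}}$ after shelling and solve, landing at the single pair $\kappa_0^\ast=[1+2\sqrt{(1-\theta_{5S})/(1+\theta_{5S})}\,]^{-1}$, $\Delta^\ast=[\sqrt{1-\theta_{5S}}+\tfrac12\sqrt{1+\theta_{5S}}\,]^{-1}$. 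Note that this $\Delta^\ast$ is strictly \emph{smaller} than the paper's bound $[\sqrt{1-\theta_{5S}}+\tfrac{\kappa_0-1}{2\kappa_0}\sqrt{1+\theta_{5S}}]^{-1}$ at every $\kappa_0\in(\kappa_0^\ast,\tfrac12)$, so your point does not sit on that curve as you suggest --- it dominates it, and monotonicity of the UDP inequality in $(\kappa_0,\Delta)$ then sweeps out all of \eqref{eq:RIPimpliesUDPloose}. Your approach is cleaner and even slightly stronger; the paper's dichotomy buys an explicit trade-off curve between $\kappa_0$ and $\Delta$.

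For the RE and Compatibility cases your dichotomy has a real gap. Outside the cone you correctly obtain $\norm{\ell_1}{\gamma_{\S}}<\tfrac{1}{1+c_0}\norm{\ell_1}{\gamma}$, but concluding $\norm{\ell_1}{\gamma_{\S}}\le c_0\norm{\ell_1}{\gamma}$ would require $\tfrac{1}{1+c_0}\le c_0$, i.e.\ $c_0\ge(\sqrt5-1)/2>1/2$. Since the UDP definition forces the second parameter below $1/2$, your ``relevant range of $c_0$'' is empty, and the argument as written does not deliver $\mathrm{UDP}(S,c_0,\cdot)$. The paper itself offers no details here beyond ``one can check,'' so you are not diverging from a recorded argument --- but the step you flag parenthetically is exactly where something more is needed (for instance, a reformulation with $\kappa_0=1/(1+c_0)$ rather than $\kappa_0=c_0$ would make your dichotomy go through).
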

\begin{remark}
The point here is to show that the UDP condition is similar to the standard conditions of the high-dimensional regression. For the sake of simplicity, we do not study that if the converse of Proposition \ref{prop:RIPimpliesUDP} is true. As a matter of fact, the UDP, RE and Compatibility conditions are expressions with the same flavor: they aim at controlling the eigenvalues of $X$ on a cone:
 \[
  \{
  \gamma\in\R^p\quad |\quad \forall\,S\in\{1,\ldots,p\}\,,\ \mathrm{s.t.}\ \abs S\leq s\,,\ \norm{\ell_1}{\gamma_{S^c}}\leq c\norm{\ell_1}{\gamma_S}
  \}\,,
 \]
 where $c>0$ is a tunning parameter.
\end{remark}
\subsection{The results}
Table \ref{Comparizon} shows that our results are similar to standard results in the literature.
\begin{table}[!t]
{
 \begin{tabular}{|c|c||c|c|}
 \hline 
 Reference & Condition & Risk & Prediction\\
 \hline
  \hline
 \cite{MR2543688} & RIP &$\ell_2^2\lesssim \sigma^2{s\log p}+\ell_2^2$ & $L_2^2\lesssim \sigma^2{s\log p} + L_2^2$ \\
 \hline
 \cite{MR2533469} & REC &$\ell_1\lesssim \sigma s\sqrt{\log p}$ (*)  &$L_2\lesssim \sigma\sqrt{s\log p}$ (*)   \\
 \hline 
 \cite{MR2576316} & Comp. & $\ell_1\lesssim \sigma s\sqrt{\log p}$ (*) & $L_2\lesssim \sigma\sqrt{s\log p}$ (*) \\
 \hline
 \cite{JN10}&$\mathbf H_{S,1}$ & $\ell_1\lesssim \sigma s^2\sqrt{\log p} + \ell_1$ & No \\
 \hline
 \hline This article & UDP & $\ell_1\lesssim \sigma s\sqrt{\log p} + \ell_1 $ & $L_2\lesssim \sigma\sqrt{s\log p} + \ell_1/\sqrt s $\\ 
 \hline
\end{tabular}}

\bigskip 
where notations are given by:
\bigskip
{
\begin{tabular}{|c|c|c|c|}
 \hline 
 Location & $\ell_1$ & $\ell_2$ & $L_2$ \\
 \hline
  \hline
 Right hand side & $\norm{\ell_1}{\betastar_{\mathcal S^c}}$ &  $\norm{\ell_2}{\betastar_{\mathcal S^c}}$  &  $\norm{\ell_2}{X\beta^{idea\ell}-X\betastar}$ \\
 \hline
 Left hand side & $\norm{\ell_1}{\hat\beta-\betastar}$ &  $\norm{\ell_2}{\hat\beta-\betastar}$  &  $\norm{\ell_2}{X\hat\beta-X\betastar}$   \\ 
 \hline
\end{tabular}}
\bigskip

 \caption{
 Comparison of results in risk and prediction for the Lasso and the Dantzig selector. Observe that all the inequalities are satisfied with an overwhelming probability. The $\lesssim $ notation means that the inequality holds up to a multiplicative factor that may depends on the parameters of the condition. The (*) notation means that the result is given for $s$-sparse targets. The $\hat\beta$ notation represents the estimator (i.e. the lasso or the Dantzig selector). The parameters $\sigma$ and $p$ represent respectively the standard deviation of the noise and the dimension of the target vector $\betastar$.
  }\label{Comparizon}
\end{table}
\appendix
\setcounter{equation}{0}
\renewcommand{\theequation}{A.\arabic{equation}}
 \section{Appendix}\label{Appendix}
\noindent
The appendix is devoted to the proof of the different results of this paper.
\begin{lemma}\label{Bound on the noise}
 Suppose that $\varepsilon=(\varepsilon_i)_{i=1}^n$ is such that the $\varepsilon_i$'s are i.i.d with respect to a Gaussian distribution with mean zero and variance $\sigma_n^2$. Choose $t\geq 1$ and set
\[\lambda^0_n(t)=(1+t)\cdot\norm{\ell_2,\infty}{X}\cdot \sigma_n\cdot\sqrt{{\log p}},\]
where $\norm{\ell_2,\infty}{X}$ denotes the maximum $\ell_2$-norm of the columns of $X$. Then,
\[\mathbb P\big(\norm{\ell_\infty}{X^\top
\varepsilon}\leq\lambda^0_n(t)\big)\geq1-{\sqrt
2}/\Big[{(1+t)\sqrt{\pi\log p}\  p^{\frac{(1+t)^2}2-1}}\Big]\,.\]
\end{lemma}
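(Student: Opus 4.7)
The plan is to combine the Gaussianity of each coordinate of $X^\top \varepsilon$, a sharp Mills-ratio estimate for the standard Gaussian tail, and a union bound over the $p$ coordinates.

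First I would observe that for each column index $j\in\{1,\dotsc,p\}$ the scalar $(X^\top\varepsilon)_j=\langle X_j,\varepsilon\rangle$ is a centered Gaussian with variance $\sigma_n^2\,\|X_j\|_{\ell_2}^2\leq\sigma_n^2\,\|X\|_{\ell_2,\infty}^2$. Consequently $(X^\top\varepsilon)_j/(\sigma_n\|X\|_{\ell_2,\infty})$ is, in absolute value, stochastically dominated by a standard normal variable $Z$. Writing $u_t=(1+t)\sqrt{\log p}$, one has $\lambda_n^0(t)=\sigma_n\|X\|_{\ell_2,\infty}\,u_t$, so the event $|(X^\top\varepsilon)_j|>\lambda_n^0(t)$ is contained in $\{|Z|>u_t\}$.

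Next I would invoke the classical Mills-ratio inequality $\mathbb{P}(|Z|\geq u)\leq\sqrt{2/\pi}\,u^{-1}\,e^{-u^2/2}$ for all $u>0$ (which is the standard integration-by-parts bound on the Gaussian tail). Applied with $u=u_t$, this gives
\[
\mathbb{P}\bigl(|(X^\top\varepsilon)_j|>\lambda_n^0(t)\bigr)\leq\sqrt{\tfrac{2}{\pi}}\cdot\frac{1}{(1+t)\sqrt{\log p}}\cdot p^{-(1+t)^2/2}.
\]
Finally a union bound over the $p$ coordinates multiplies this by $p$, yielding exactly $\sqrt{2}/\bigl[(1+t)\sqrt{\pi\log p}\cdot p^{(1+t)^2/2-1}\bigr]$ as an upper bound on $\mathbb{P}(\|X^\top\varepsilon\|_{\ell_\infty}>\lambda_n^0(t))$, which is the complement of the asserted probability.

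There is no real obstacle here; the only slightly delicate point is getting the sharp constant $\sqrt{2/\pi}$ in the Mills-ratio bound (so that the prefactor in the final inequality matches $\sqrt{2}$ exactly after multiplication by $p$) and noting that the hypothesis $t\geq 1$ is used only to ensure $u_t\geq\sqrt{\log p}\geq 0$ so that the Mills-ratio bound applies and the denominator $(1+t)\sqrt{\pi\log p}$ is well-defined and the whole tail estimate is informative for $p\geq 2$. Everything else is a routine Gaussian computation.
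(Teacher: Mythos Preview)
Your argument is correct and yields exactly the stated bound. The only difference from the paper's proof is methodological: the paper handles the possible dependence among the coordinates of $X^\top\varepsilon$ via \v{S}id\'ak's inequality, obtaining $\mathbb{P}(\|X^\top\varepsilon\|_{\ell_\infty}\leq\lambda_n^0)\geq(2\Phi(\theta)-1)^p$ with $\theta=(1+t)\sqrt{\log p}$, then applies the same Mills-ratio estimate $1-\Phi(\theta)<\varphi(\theta)/\theta$ and finally the Bernoulli-type inequality $(1-x)^p\geq 1-px$. You instead bound the complementary event directly by a union bound, which is more elementary and sidesteps \v{S}id\'ak entirely; since the paper's final step $(1-x)^p\geq1-px$ is precisely what the union bound gives, the two routes coincide numerically. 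Your approach is shorter and requires no special Gaussian correlation inequality; the paper's route would only pay off if one wished to retain the sharper product form before the last step.
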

\noindent

\begin{pfof}{Lemma \ref{Bound on the noise}}
 Observe that $X^\top \varepsilon\sim\mathcal{N}_p(0,\sigma_n^2\,X^\top X)$. Hence, $\forall j=1,\dotsc,p,\quad X_j^\top
\varepsilon\sim\mathcal{N}\big(0,{\sigma_n^2}\,\norm{\ell_2}{X_j}^2\big)$. Using \v{S}id\'{a}k's inequality \cite{MR0230403}, it yields
\[\mathbb{P}\big(\lVert{X^\top
\varepsilon}\lVert_{\ell_\infty}\leq\lambda_n^0\big)\geq\mathbb{P}\big(\norm{\ell_\infty}{ \widetilde \varepsilon}
\leq\lambda_n^0\big)=\prod_{i=1}^p\mathbb{P}\left(\abs{\widetilde{\varepsilon}_i}
\leq\lambda_n^0\right)\,,\]
where the $\widetilde{\varepsilon}_i$'s are i.i.d. with respect to
$\mathcal{N}\big(0,\sigma_n^2\,\norm{\ell_2,\infty}{X}^2\big)$. Denote by $\Phi$ and $\varphi$
respectively the {cumulative distribution function} and the probability density
function of the standard normal. Set
$\theta=(1+t)\sqrt{\log p}$. It holds
\[
 \prod_{i=1}^p\mathbb{P}\left(\abs{ \widetilde \varepsilon_i}\leq\lambda_n^0\right)
=\mathbb{P}\left(\abs{\varepsilon_1}\leq\lambda_n^0\right)^p =  (2\Phi(\theta)-1)^p >
\big(1-2{\varphi(\theta)}/{\theta}\big)^p\,,
\]
using an integration by parts to get $1-\Phi(\theta)<{\varphi(\theta)}/ \theta$. It yields that
\begin{equation*}
 \mathbb{P}\big(\norm{\ell_\infty}{X^\top
\varepsilon}\leq\lambda_n^0\big)\geq\big(1-2{\varphi(\theta)}/{\theta}
\big)^p
\geq1-2p\frac{\varphi(\theta)}{\theta}=1-\frac{\sqrt
2}{{(1+t)\sqrt{\pi\log p}\  p^{\frac{(1+t)^2}2-1}}}\,.
\end{equation*}
This concludes the proof.
\end{pfof}

\noindent
\begin{pfof}{Theorem \ref{Lemma Universality}}
Consider the following singular value decomposition $ X=U^\top DA$ where
\begin{itemize}
 \item[\ding{71}] $U\in\R^{n \times n}$ is such that $UU^\top =\Id_n$,
 \item[\ding{71}] $D=\mathrm{Diag}(\rho_1,\dotsc,\rho_n)$ is a diagonal matrix where
${\rho_1}\geq\dotsb\geq{\rho_n}> 0$ are the singular values of $X$,
  \item[\ding{71}] and $A\in\R^{n\times p}$ is such that $AA^\top=\Id_n$.
\end{itemize}
We recall that the only assumption on the design is that it has full column rank which yields
that $\rho_n>0$. Let $\delta$ be the distortion of the kernel $\Gamma$ of the
design.
Denote by $\pi_\Gamma$ (resp. $\pi_{\Gamma^\perp}$) the $\ell_2$-projection onto $\Gamma$ (resp.
$\Gamma^\perp$). Let $\gamma\in\R^p$; then $\gamma=\pi_\Gamma(\gamma)+\pi_{\Gamma^\perp}(\gamma)$.
An easy calculation
shows that $\pi_{\Gamma^\perp}(\gamma)=A^\top A \gamma$. Let $ s\in\{1,\dotsc,S\}$ and let $\S\subseteq\{1,\dotsc,p\}$ be such that $\abs\S=s$. It holds,
{\allowdisplaybreaks
 \begin{align*}
  \norm{\ell_1}{\gamma_\S} &\leq\sqrt s\norm{\ell_2}{\gamma} =  \sqrt s \,\norm{\ell_2}{\pi_\Gamma(\gamma)} +
\sqrt s\, \norm{\ell_2}{\pi_{\Gamma^\perp}(\gamma)},\\
& \leq  {\frac{\sqrt s}{\sqrt p}}\,\delta\, \norm{\ell_1}{\pi_\Gamma(\gamma)} +
\sqrt s \,\norm{\ell_2}{A^\top A \gamma},\\
& \leq  {\frac{\sqrt s}{\sqrt p}}\,\delta\,\big(
\norm{\ell_1}{\gamma}+\norm{\ell_1}{(\pi_{\Gamma^\perp}(\gamma))}\big) +
\sqrt s\, \norm{\ell_2}{A \gamma},\\
& \leq  {\frac{\sqrt s}{\sqrt p}}\,\delta\,\norm{\ell_1}{\gamma}+ \delta\,\sqrt s\,
\norm{\ell_2}{A^\top A
\gamma} +\sqrt s\, \norm{\ell_2}{A
\gamma},\\
& \leq  {\frac{\sqrt s}{\sqrt p}}\,\delta\,\norm{\ell_1}{\gamma}+ (1+\delta)\,\sqrt s\,
\norm{\ell_2}{A
\gamma},\\
& \leq  {\frac{\sqrt s}{\sqrt p}}\,\delta\,\norm{\ell_1}{\gamma}+ \frac{1+\delta}{\rho_n}\,\sqrt
s
\,\norm{\ell_2}{X \gamma},\\
& \leq  {\frac{\sqrt s}{\sqrt p}}\,\delta\,\norm{\ell_1}{\gamma}+ \frac{2\delta}{\rho_n}\,\sqrt s
\,\norm{\ell_2}{X \gamma},
 \end{align*}}
 \noindent using the triangular inequality and the distortion of the kernel $\Gamma$. Eventually, set
$\kappa_0={({\sqrt
S}/{\sqrt p})}\,\delta$ and $\Delta={2\delta}/{\rho_n}$. This ends the
proof.
\end{pfof}

\noindent
\begin{pfof}{Theorem \ref{Theo12}}
We recall that $\lambda_n^0$ denotes an upper
bound on the amplification of the noise; see \eqref{Borne Az}. We begin with
a standard result.
\begin{lemma}
 Let $h=\beta^\ell-\betastar\in\R^p$ and $\lambda_\ell\geq\lambda_n^0$. Then, for all subsets
$\S\subseteq\{1,\dotsc,p\}$, it holds,
\begin{equation}\label{StandardInequality}
 \frac{1}{2\lambda_\ell}\Big[\frac12\norm{\ell_2}{Xh}^2+(\lambda_\ell-\lambda_n^0)\norm{\ell_1
}{h}\Big]
\leq \norm{\ell_1}{h_\S}+\norm{\ell_1}{\betastar_{\S^c}}.
\end{equation}
\end{lemma}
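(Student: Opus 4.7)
The plan is to exploit the defining optimality of $\beta^\ell$ in the lasso program, then combine it with the noise bound \eqref{Borne Az} and a clever split of the $\ell_1$-norm on $\S$ and $\S^c$.

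First I would write $h=\beta^\ell-\betastar$ and use the fact that $\beta^\ell$ is a minimizer of the lasso criterion in \eqref{Lasso} at $\beta=\betastar$, i.e.
\[
\tfrac12\norm{\ell_2}{X\beta^\ell-y}^2+\lambda_\ell\norm{\ell_1}{\beta^\ell}\ \leq\ \tfrac12\norm{\ell_2}{X\betastar-y}^2+\lambda_\ell\norm{\ell_1}{\betastar}.
\]
Substituting $y=X\betastar+\varepsilon$ so that $X\beta^\ell-y=Xh-\varepsilon$ and $X\betastar-y=-\varepsilon$, the quadratic term expands and the $\frac12\norm{\ell_2}{\varepsilon}^2$ contribution cancels on both sides. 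What remains is
\[
\tfrac12\norm{\ell_2}{Xh}^2\ \leq\ \langle Xh,\varepsilon\rangle+\lambda_\ell\bigl(\norm{\ell_1}{\betastar}-\norm{\ell_1}{\beta^\ell}\bigr).
\]

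Next I would control the inner product by duality of $\ell_1/\ell_\infty$: $\langle Xh,\varepsilon\rangle=\langle h,X^\top\varepsilon\rangle\leq\norm{\ell_\infty}{X^\top\varepsilon}\cdot\norm{\ell_1}{h}\leq\lambda_n^0\norm{\ell_1}{h}$, using the hypothesis \eqref{Borne Az}. For the $\ell_1$-difference I would pick an arbitrary $\S\subseteq\{1,\dotsc,p\}$ and split $\betastar=\betastar_\S+\betastar_{\S^c}$ and $\beta^\ell=\betastar+h$. The triangle inequality then gives
\[
\norm{\ell_1}{\beta^\ell}\geq\norm{\ell_1}{\betastar_\S}-\norm{\ell_1}{h_\S}+\norm{\ell_1}{h_{\S^c}}-\norm{\ell_1}{\betastar_{\S^c}},
\]
so
\[
\norm{\ell_1}{\betastar}-\norm{\ell_1}{\beta^\ell}\ \leq\ \norm{\ell_1}{h_\S}-\norm{\ell_1}{h_{\S^c}}+2\norm{\ell_1}{\betastar_{\S^c}}.
\]

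Plugging these two bounds back yields
\[
\tfrac12\norm{\ell_2}{Xh}^2+\lambda_\ell\bigl(\norm{\ell_1}{h_{\S^c}}-\norm{\ell_1}{h_\S}\bigr)\ \leq\ \lambda_n^0\norm{\ell_1}{h}+2\lambda_\ell\norm{\ell_1}{\betastar_{\S^c}}.
\]
Finally, using the identity $\norm{\ell_1}{h_{\S^c}}-\norm{\ell_1}{h_\S}=\norm{\ell_1}{h}-2\norm{\ell_1}{h_\S}$, I would move $2\lambda_\ell\norm{\ell_1}{h_\S}$ and $\lambda_n^0\norm{\ell_1}{h}$ to the appropriate sides to obtain
\[
\tfrac12\norm{\ell_2}{Xh}^2+(\lambda_\ell-\lambda_n^0)\norm{\ell_1}{h}\ \leq\ 2\lambda_\ell\bigl(\norm{\ell_1}{h_\S}+\norm{\ell_1}{\betastar_{\S^c}}\bigr),
\]
and dividing by $2\lambda_\ell>0$ (which is licit since $\lambda_\ell\geq\lambda_n^0\geq 0$) gives exactly \eqref{StandardInequality}. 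There is no real obstacle here: the proof is essentially bookkeeping, and the only subtle point is recognising that one must not split $\norm{\ell_1}{h}$ on the right-hand side of the noise bound, which is precisely what allows the $(\lambda_\ell-\lambda_n^0)\norm{\ell_1}{h}$ term to appear in the final statement.
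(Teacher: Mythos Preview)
Your proof is correct and follows essentially the same route as the paper: start from the lasso optimality inequality, expand the quadratic to isolate $\tfrac12\norm{\ell_2}{Xh}^2$, bound $\langle h,X^\top\varepsilon\rangle$ by $\lambda_n^0\norm{\ell_1}{h}$, split $\norm{\ell_1}{\betastar}-\norm{\ell_1}{\beta^\ell}$ via the triangle inequality on $\S$ and $\S^c$, and rearrange. The only cosmetic difference is that the paper adds $\lambda_\ell\norm{\ell_1}{\betastar_{\S^c}}$ and then $(\lambda_\ell-\lambda_n^0)\norm{\ell_1}{h_\S}$ to both sides in two separate steps, whereas you use the identity $\norm{\ell_1}{h_{\S^c}}-\norm{\ell_1}{h_\S}=\norm{\ell_1}{h}-2\norm{\ell_1}{h_\S}$ to do the bookkeeping in one shot.
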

\begin{proof}
 By optimality, we have
\[\frac12\norm{\ell_2}{
X\beta^\ell-y }^2+\lambda_\ell \norm{\ell_1}{\beta^\ell}\leq\frac12\norm{\ell_2}{
X\betastar-y }^2+\lambda_\ell \norm{\ell_1}{\betastar}.\]
It yields
 \[\frac12\norm{\ell_2}{Xh }^2-\big\langle X^\top \varepsilon,h\big\rangle+\lambda_\ell
\norm{\ell_1}{\beta^\ell}\leq \lambda_\ell \norm{\ell_1}{\betastar}.\]
Let $\S\subseteq\{1,\dotsc,p\}$; we have
\begin{align*}
 \frac12\norm{\ell_2}{Xh }^2+\lambda_\ell \norm{\ell_1}{\beta^\ell_{\S^c}}& \leq  \lambda_\ell
\big(\norm{\ell_1}{\betastar_\S}-\norm{\ell_1}{\beta^\ell_{\S}}\big)+\lambda_\ell\norm{\ell_1}{
\betastar_{\S^c}}+\big\langle X^\top \varepsilon,h\big\rangle,\\
 &\leq  \lambda_\ell\norm{\ell_1}{h_{\S}} + \lambda_\ell\norm{\ell_1}{
\betastar_{\S^c}} + \lambda_n^0\norm{\ell_1}{h},
\end{align*}
using \eqref{Borne Az}. Adding $\lambda_\ell\norm{\ell_1}{\betastar_{\S^c}}$ on both sides, it
holds
\[\frac12\norm{\ell_2}{Xh }^2+(\lambda_\ell-\lambda_n^0) \norm{\ell_1}{h_{\S^c}}\leq
(\lambda_\ell+\lambda_n^0)\norm{\ell_1}{h_{\S}} +2 \lambda_\ell\norm{\ell_1}{
\betastar_{\S^c}}.
 \]
Adding $(\lambda_\ell-\lambda_n^0) \norm{\ell_1}{h_{\S}} $ on both sides, we conclude the
proof.
\end{proof}
\noindent
Using \eqref{Def UDP} and \eqref{StandardInequality}, it follows that
\begin{equation}\label{eq:DemoUDPOracle}
 \frac{1}{2\lambda_\ell}\Big[\frac12\norm{\ell_2}{Xh}^2+(\lambda_\ell-\lambda_n^0)\norm{\ell_1
}{h}\Big
]
\leq\Delta\sqrt
s\,\norm{\ell_2}{X h}+\kappa_0\norm{\ell_1}{h}+\norm{\ell_1}{\betastar_{\S^c}}.
\end{equation}
It yields,
\begin{align*}
 \Big[\frac12\Big(1-\frac{\lambda_n^0}{\lambda_\ell}\Big)-\kappa_0\Big]\norm{\ell_1}{h} & \leq
\Big(\!-\frac{1}{4\lambda_\ell}\norm{\ell_2}{Xh}^2+\Delta\sqrt
s\,\norm{\ell_2}{X h}\Big)+\norm{\ell_1}{\betastar_{\S^c}},\\
 & \leq
\lambda_\ell\,\Delta^2\,s+\norm{\ell_1}{\betastar_{\S^c}},
\end{align*}
using the fact that the polynomial $x\mapsto -(1/4\lambda_\ell)\,x^2+\Delta \sqrt s\, x$ is not
greater than $\lambda_\ell\,\Delta^2\,s$. This concludes the proof.
\end{pfof}

\noindent
\begin{pfof}{Theorem \ref{Theo22}}
 We begin with
a standard result.
\begin{lemma}
 Let $h=\beta^\ell-\betastar\in\R^p$ and $\lambda_\ell\geq\lambda_n^0$. Then, for all subsets
$\S\subseteq\{1,\dotsc,p\}$, it holds,
\begin{equation}\label{StandardInequalityDantz}
 \frac{1}{4\lambda_d}\Big[\norm{\ell_2}{Xh}^2+(\lambda_d-\lambda_n^0)\norm{\ell_1
}{h}\Big]
\leq \norm{\ell_1}{h_\S}+\norm{\ell_1}{\betastar_{\S^c}}.
\end{equation}
\end{lemma}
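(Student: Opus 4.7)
The plan exploits two pillars of the Dantzig selector: feasibility of $\betastar$ and minimality of $\norm{\ell_1}{\beta^d}$ (where, despite the statement copied from the lasso lemma, one should read $h = \beta^d - \betastar$ and $\lambda_d \geq \lambda_n^0$).

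First, I would show that $\betastar$ is feasible for the Dantzig program. Indeed, under \eqref{Borne Az} one has $\norm{\ell_\infty}{X^\top(y-X\betastar)} = \norm{\ell_\infty}{X^\top\varepsilon} \leq \lambda_n^0 \leq \lambda_d$. By optimality of $\beta^d$ it follows that $\norm{\ell_1}{\beta^d} \leq \norm{\ell_1}{\betastar}$. Splitting both sides along $\S$ and $\S^c$ and applying the triangle inequality to each block yields the usual cone-type estimate $\norm{\ell_1}{h_{\S^c}} \leq \norm{\ell_1}{h_\S} + 2\norm{\ell_1}{\betastar_{\S^c}}$, hence
\[
\norm{\ell_1}{h} \;\leq\; 2\bigl(\norm{\ell_1}{h_\S}+\norm{\ell_1}{\betastar_{\S^c}}\bigr).
\]

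Next I would control $\norm{\ell_2}{Xh}^2$ by duality. Writing $\norm{\ell_2}{Xh}^2 = \langle X^\top X h,h\rangle \leq \norm{\ell_\infty}{X^\top X h}\,\norm{\ell_1}{h}$, the feasibility of $\beta^d$ combined with the noise bound yields
\[
\norm{\ell_\infty}{X^\top X h} \;\leq\; \norm{\ell_\infty}{X^\top(X\beta^d-y)} + \norm{\ell_\infty}{X^\top\varepsilon} \;\leq\; \lambda_d + \lambda_n^0,
\]
so $\norm{\ell_2}{Xh}^2 \leq (\lambda_d+\lambda_n^0)\,\norm{\ell_1}{h}$. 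Adding $(\lambda_d-\lambda_n^0)\,\norm{\ell_1}{h}$ on both sides and then using the cone estimate gives
\[
\norm{\ell_2}{Xh}^2 + (\lambda_d-\lambda_n^0)\norm{\ell_1}{h} \;\leq\; 2\lambda_d\,\norm{\ell_1}{h} \;\leq\; 4\lambda_d\bigl(\norm{\ell_1}{h_\S}+\norm{\ell_1}{\betastar_{\S^c}}\bigr),
\]
and dividing by $4\lambda_d$ finishes the proof.

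The argument is largely routine; the only subtle point is that feasibility is invoked \emph{twice} — once to secure $\norm{\ell_1}{\beta^d} \leq \norm{\ell_1}{\betastar}$ (hence the cone inequality) and once to bound $\norm{\ell_\infty}{X^\top X h}$ by $\lambda_d+\lambda_n^0$. This double use is exactly what changes the prefactor $\tfrac{1}{2\lambda_\ell}$ of \eqref{StandardInequality} into $\tfrac{1}{4\lambda_d}$ and absorbs the $\tfrac{1}{2}$ in front of $\norm{\ell_2}{Xh}^2$: here the prediction term comes from a direct duality estimate rather than from the quadratic term of a penalized objective.
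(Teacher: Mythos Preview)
Your proof is correct and follows essentially the same route as the paper: both use (i) the duality bound $\norm{\ell_2}{Xh}^2\leq(\lambda_d+\lambda_n^0)\norm{\ell_1}{h}$ from feasibility of $\beta^d$ plus the noise bound, and (ii) the cone inequality $\norm{\ell_1}{h_{\S^c}}\leq\norm{\ell_1}{h_\S}+2\norm{\ell_1}{\betastar_{\S^c}}$ from $\ell_1$-minimality. The only cosmetic difference is the bookkeeping at the end: the paper splits the duality bound along $\S$ and $\S^c$ and then forms the weighted combination $(\text{duality bound})+2\lambda_d\cdot(\text{cone inequality})$, whereas you keep $\norm{\ell_1}{h}$ intact, add $(\lambda_d-\lambda_n^0)\norm{\ell_1}{h}$ to reach $2\lambda_d\norm{\ell_1}{h}$, and only then invoke the cone inequality---a slightly cleaner path to the same inequality.
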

\begin{proof}
 Set $h=\beta^\star-\beta^d$. Recall that $\lVert{X^\top \varepsilon}\lVert_{\ell_\infty}\leq\lambda_n^0$,
it yields
\begin{align}
 \lVert{Xh }\lVert_{\ell_2}^2 & \leq
\lVert{X^\top Xh }\lVert_{\ell_\infty}\lVert h \lVert_{\ell_1}  =
\lVert{X^\top \big(y-X\beta^d\big)+X^\top
\big(X\beta^\star-y\big)}\lVert_{\ell_\infty}\norm{\ell_1} h
 \notag\\
& \leq
(\lambda_d+\lambda_n^0)\norm{\ell_1} h \,.\notag
\end{align}
Hence we get
\begin{equation}\label{Xgamma}
 \lVert{Xh
}\lVert_{\ell_2}^2-(\lambda_d+\lambda_n^0)\norm{\ell_1}{h_{S^c}}\leq(\lambda_d+\lambda_n^0)\norm{
\ell_1 }{h _{S}}\,.
\end{equation}
Since $\betastar$ is feasible, it yields $\norm{\ell_1}{\beta^d}\leq\norm{\ell_1}{\beta^\star}$.
Thus,
\[
 \lVert{\beta^d_{S^c}}\lVert_{\ell_1}
\leq\big(\lVert{\beta^\star_S}\lVert_{\ell_1}-\lVert{\beta^d_{S}}\lVert_{\ell_1}\big)+\lVert{
\beta^\star_{ S^c}}\lVert_{\ell_1}\leq\lVert{
h _{S}}\lVert_{\ell_1}+\lVert{\beta^\star_{S^c}}\lVert_{\ell_1}\,.
\]
Since $\norm{\ell_1}{h_{S^c}}\leq\norm{\ell_1}{\beta^d_{S^c}}+\norm{\ell_1}{\beta^\star_{S^c}}$,
it yields
\begin{equation}\label{Dantzig Uncertainty}
 \norm{\ell_1}{h _{S^c}}\leq\norm{{\ell_1}h _{S}}+2\norm{\ell_1}{\beta^\star_{S^c}}\,.
\end{equation}
Combining $\eqref{Xgamma}+2\lambda_d\cdot\eqref{Dantzig Uncertainty}$, we get
\[ \lVert{Xh
}\lVert_{\ell_2}^2+(\lambda_d-\lambda_n^0)\norm{\ell_1}{h_{S^c}}\leq(3\lambda_d+\lambda_n^0)\norm{
\ell_1}{h _{S}}+4\lambda_d\norm{\ell_1}{\beta^\star_{S^c}}\,.\]
Adding $(\lambda_d-\lambda_n^0) \norm{\ell_1}{h_{\S}} $ on both sides, we conclude the
proof.
\end{proof}
Using \eqref{Def UDP} and \eqref{StandardInequalityDantz}, it follows that
\begin{equation}\label{eq:DemoUDPOracleDantz}
 \frac{1}{4\lambda_\ell}\Big[\norm{\ell_2}{Xh}^2+(\lambda_\ell-\lambda_n^0)\norm{\ell_1
}{h}\Big
]
\leq\Delta\sqrt
s\,\norm{\ell_2}{X h}+\kappa_0\norm{\ell_1}{h}+\norm{\ell_1}{\betastar_{\S^c}}.
\end{equation}
It yields,
\begin{align*}
 \Big[\frac14\Big(1-\frac{\lambda_n^0}{\lambda_\ell}\Big)-\kappa_0\Big]\norm{\ell_1}{h} & \leq
\Big(\!-\frac{1}{4\lambda_\ell}\norm{\ell_2}{Xh}^2+\Delta\sqrt
s\,\norm{\ell_2}{X h}\Big)+\norm{\ell_1}{\betastar_{\S^c}},\\
 & \leq
\lambda_\ell\,\Delta^2\,s+\norm{\ell_1}{\betastar_{\S^c}},
\end{align*}
using the fact that the polynomial $x\mapsto -(1/4\lambda_\ell)\,x^2+\Delta \sqrt s\, x$ is not
greater than $\lambda_\ell\,\Delta^2\,s$. This concludes the proof.
\end{pfof}

\noindent
\begin{pfof}{Theorem \ref{Theo13} and Theorem \ref{Theo23}}
Using \eqref{eq:DemoUDPOracle}, we know that
\[\frac{1}{2\lambda_\ell}\Big[\frac12\norm{\ell_2}{Xh}^2+(\lambda_\ell-\lambda_n^0)\norm{
\ell_1}{h}\Big
]
\leq\Delta\sqrt
s\,\norm{\ell_2}{X h}+\kappa_0\norm{\ell_1}{h}+\norm{\ell_1}{\betastar_{\S^c}}.\]
It follows that
\[\norm{\ell_2}{Xh}^2-4\lambda_\ell\,\Delta\sqrt s\,\norm{\ell_2}{X h}
\leq4\lambda_\ell\,\norm{\ell_1}{\betastar_{\S^c}}\,.\]
This latter is of the form $x^2-b x\leq c$ which implies that $x\leq
b+c/b$. Hence,
\[\norm{\ell_2}{Xh}\leq4\lambda_\ell\,\Delta\sqrt s
+\frac{\norm{\ell_1}{\betastar_{\S^c}}}{\Delta\sqrt s}\,.\]
The same analysis holds for Theorem \ref{Theo23}.
\end{pfof}

\begin{pfof}{Proposition \ref{prop:RIPimpliesUDP}}
One can check that $RE(S,c_0)$ implies $\mathrm{UDP}(S,c_0,\kappa(S,c_0)^{-1})$, and that $Compatibility(S,c_0)$
implies $\mathrm{UDP}(S,c_0,\phi(S,c_0)^{-1})$.

Assume that $X$ satisfies $\RIP(\theta_{5S})$. Let $\gamma\in\R^p$, $
s\in\{1,\dotsc,S_0\}$, and
$T_0\subseteq\{1,\dotsc,p\}$ such that $\abs{T_0}=s$. Choose a pair $(\kappa_0 , \Delta) $ as in
\eqref{eq:RIPimpliesUDPloose}.

\noindent\ding{71} If
$\norm{\ell_1}{\gamma_{T_0}}\leq\kappa_0\norm{\ell_1}{\gamma}$ then
$\norm{\ell_1}{\gamma_{T_0}}\leq\Delta\sqrt
s\norm{\ell_2}{X\gamma}+\kappa_0\norm{\ell_1}{\gamma}$.

\noindent\ding{71} Suppose that
$\norm{\ell_1}{\gamma_{T_0}}>\kappa_0\norm{\ell_1}{\gamma}$ then
\begin{equation}\label{eq:DemoRIpimpliesUDP1}
 \norm{\ell_1}{\gamma_{T_0^c}}<\frac{1-\kappa_0}{\kappa_0}\norm{\ell_1}{\gamma_{T_0}}\,.
\end{equation}
Denote by $T_1$ the set of the indices of the $4s$ largest coefficients (in absolute value) in
$T_0^c$, denote by $T_2$ the set of the indices of the $4s$ largest coefficients in
$(T_0\cup T_1)^c$, etc... Hence we decompose $T_0^c$ into disjoint sets
$T_0^c=T_1\cup T_2\cup\dotsc\cup T_l$. Using \eqref{eq:DemoRIpimpliesUDP1}, it yields
\begin{equation}\label{eq:ContrainteconeUDP}
\sum_{i\geq 2}\norm{\ell_2}{\gamma_{T_i}}\leq
(4s)^{-1/2}\sum_{i\geq
1}\norm{\ell_1}{\gamma_{T_i}}=(4s)^{-1/2}\norm{\ell_1}{\gamma_{T_0^c}}\leq
\frac{1-\kappa_0}{2\kappa_0\sqrt s}\norm{\ell_1}{\gamma_{ T_0}}
\end{equation}
Using $\RIP(\theta_{5S})$ and \eqref{eq:ContrainteconeUDP}, it follows that
\begin{align*}
 \norm{\ell_2}{X\gamma}& \geq  \norm{\ell_2}{X(\gamma_{(T_0\cup T_1)})}-\sum_{i\geq
2}\norm{\ell_2}{X(\gamma_{T_i})}\,,\\
& \geq  \sqrt{1-\theta_{5S}}\,\norm{\ell_2}{\gamma_{(T_0\cup
T_1)}}-\sqrt{1+\theta_{5S}}\sum_{i\geq
2}\norm{\ell_2}{\gamma_{T_i}}\,,\\
& \geq  \sqrt{1-\theta_{5S}}\,{\norm{\ell_2}{\gamma_{T_0}}}-\sqrt{1+\theta_{5S}}\
\frac{1-\kappa_0}{2\kappa_0}\ \frac{\norm{\ell_1}{\gamma_{T_0}}}{\sqrt s}\,,\\
 & \geq  \Big[\sqrt{1-\theta_{5S}}+\frac{\kappa_0-1}{2\kappa_0}\sqrt{1+\theta_{5S}}
\Big]\,\frac{\norm{\ell_1}{\gamma_{T_0}}}{\sqrt s}\,,\\
 &
=\frac{\sqrt{1+\theta_{5S}}}{2\kappa_0}\bigg[{1+2\Big(\frac{1-\theta_{5S}}{1+\theta_{5S}}
\Big)^ {\frac12
}} \bigg]
\bigg[\kappa_0-\bigg[{1+2\Big(\frac{1-\theta_{5S}}{1+\theta_{5S}}\Big)^{\frac12}}\bigg]^{-1}
\bigg]\,\frac{\norm{\ell_1}{\gamma_{T_0}}}{\sqrt s}\,.
\end{align*}
The lower bound on $\kappa_0$ shows that the right hand side is positive. Observe that we took
$\Delta$ such that this latter is exactly
${\norm{\ell_1}{\gamma_{T_0}}}/( \Delta{\sqrt s})$. Eventually, we get
\[\norm{\ell_1}{\gamma_{T_0}}\leq\Delta\sqrt s\norm{\ell_2}{X\gamma}\leq \Delta\sqrt
s\norm{\ell_2}{X\gamma}+\kappa_0\norm{\ell_1}{\gamma}\,.\]
This ends the proofs.
\end{pfof}


\vspace*{0.5cm}

\noindent\textbf{Acknowledgments} --- The author would like to thank Jean-Marc Aza\"is and Franck Barthe for their support. The authors would like to thank the anonymous reviewers for their valuable comments and suggestions.

 \bibliographystyle{amsalpha}
 \nocite{*}
 \bibliography{Lasso}
\end{document}